\theoremstyle{plain}
\newtheorem{thm}{Theorem}
\newtheorem{lemma}[thm]{Lemma}
\newtheorem{prop}[thm]{Proposition}
\theoremstyle{definition}
\newtheorem{defi}[thm]{Definition}
\newtheorem{rmk}[thm]{Remark}
\begin{document}

\title{Attracting measures}
\author{Julian Newman and Peter Ashwin\\
Department of Mathematics and Statistics, \\
University of Exeter, \\
Exeter EX4 4QF, United Kingdom}

\maketitle

\begin{abstract}
\noindent Under mild assumptions, the SRB measure $\mu$ associated to an Axiom~A attractor $A$ has the following properties: (i)~the empirical measure starting at a typical point near $A$ converges weakly to $\mu$; (ii)~the pushforward of any Lebesgue-absolutely continuous probability measure supported near $A$ converges weakly to $\mu$. In general, a measure with the first property is called a ``physical measure'', and physical measures are recognised as generally important in their own right. In this paper, we highlight the second property as also important in its own right, and we prove a result that serves as a topological abstraction of the original result that establishes the second property for SRB measures on Axiom A attractors.
\end{abstract}

\section{Introduction} \label{sec:Intro}


Every compact invariant set of a continuous autonomous dynamical system supports at least one invariant probability measure, and often supports uncountably many invariant probability measures. An important question in ergodic theory is whether one of the invariant measures on a given compact invariant set can be singled out as ``the natural'' measure on that set, in that this measure in some sense describes the typical behaviour of trajectories in a neighbourhood of the set.

An Axiom~A attractor $A$ of a $C^2$-diffeomorphism $f$ in discrete time~\cite{Ruelle:1976}, or of a flow $f\!=\!(f^t)_{t \in \mathbb{R}}$ generated by a $C^2$ vector field in continuous time~\cite{Bowen:1975}, is equipped with such a ``natural'' probability measure $\mu$ of full support in $A$, which has come to be known as the \emph{SRB measure} on $A$. The original papers \cite{Ruelle:1976,Bowen:1975} highlight two such ``naturalness'' properties of the SRB measure---namely, letting $m_A$ denote the Lebesgue measure restricted to the basin of attraction of $A$:
\begin{itemize}
    \item The first property concerns the almost-sure sample statistics of a random trajectory starting near $A$. It says that, in a certain sense, the nonsingular dynamical system $(f,m_A)$ behaves like a $\mu$-ergodic dynamical system.
    \item The second property concerns the limiting distribution of a random trajectory starting near $A$. It says that, in a certain sense, the nonsingular dynamical system $(f,m_A)$ behaves like a $\mu$-mixing dynamical system.
\end{itemize}
The first property holds with no extra conditions beyond the general setup of the papers \cite{Ruelle:1976,Bowen:1975}. The second property holds in discrete time as long as $A$ is connected, and holds for flows under a mild condition, e.g.\ that the full unstable manifold of each point in $A$ is dense in $A$.

Although each of the original papers \cite{Ruelle:1976,Bowen:1975} gives somewhat similar prominence to both of the above two properties, subsequent expository literature and further research have tended to focus more on the first property, giving it a fairly standard name (``\emph{physical measure}'') and exploring it in greater generality than the Axiom~A setting. However, just as the fundamental importance of ergodicity in dynamical systems does not cause the importance of the study of mixing to be unrecognised, so likewise the attention received by the topic of ``physical measures'' should not be to the detriment of the study of measures with the second property. In fact, for invariant measures that are singular with respect to the Lebesgue measure, this second property is more accessible~experimentally than classical mixing is~\cite{Baladi:2002}.

In this paper, we will refer to measures satisfying the second property as \emph{attracting measures}. We have introduced this terminology in the papers~\cite{ashwin2021physical,newman23} particularly in the context of dynamical systems subject to a real-time parameter drift, but our present paper will focus on the more fundamental setting of autonomous dynamical systems. We will present some fundamental facts about attracting measures, and then in Theorem~\ref{thm:mixSRB}, we provide a topological-level abstraction of the strategy of proof that was used in \cite[Theorem~5.3]{Bowen:1975} for establishing the attracting-measure property of the SRB measure $\mu$ on an Axiom~A attractor of a $C^2$ flow. Essentially, the strategy in \cite[Theorem~5.3]{Bowen:1975} was to use the fact established earlier in the paper that $\mu$ is mixing, and combine this with two other earlier-established facts about the local dynamics near the attractor: crudely speaking, one of these is that trajectories near the attractor track trajectories on the attractor, and the other is that the ``chaoticness'' on the attractor is ``not statistically much greater'' according to $\mu$ than according to the Lebesgue measure.

The structure of the paper is as follows. In the remainder of Sec.~\ref{sec:Intro}, we present some general notations, terminologies, and lemmas from measure theory. In Sec.~\ref{sec:Basic}, we start by stating the general setup of the whole paper; we then introduce definitions of ergodic, physical, mixing and attracting measures, and discuss their basic properties and mutual implications and non-implications. In Sec.~\ref{sec:Result}, we define a type of attractor that represents one of the ways in which trajectories near Axiom~A attractors track trajectories on the attractor; and with this, we then state and prove Theorem~\ref{thm:mixSRB}. Finally, in Sec.~\ref{sec:outlook}, we discuss natural further directions for the work.

\subsection{Some general preliminaries}

By way of notation:
\begin{itemize}
\item For a semiflow $(f^t)_{t \geq 0}$ of self-maps of a set $X$, given any $t \geq 0$ and $S \subset X$ we write $f^{-t}S$ for the preimage of $S$ under $f^t$.
\item For a semiflow $(f^t)_{t \geq 0}$ of measurable self-maps of a measurable space $(X,\mathcal{B})$, given any $t \geq 0$ and any measure $\nu$ on $(X,\mathcal{B})$, we write $f^t\nu$ for the pushforward measure of $\nu$ by $f^t$, that is,
\[ (f^t\nu)(S) = \nu(f^{-t}S) \quad \forall \, S \in \mathcal{B}. \]
\end{itemize}

Recall that on a measurable space $(X,\mathcal{B})$, a family $(\mu_t)_{t \geq 0}$ of finite measures $\mu_t$ is said to \emph{converge strongly} to a finite measure $\mu$ if the following equivalent statements hold:
\begin{itemize}
    \item for every bounded measurable function $g \colon X \to \mathbb{R}$, we have $\int g \, d\mu_t \to \int g \, d\mu$ as $t \to \infty$;
    \item for every $A \in \mathcal{B}$, we have $\mu_t(A) \to \mu(A)$ as $t \to \infty$.
\end{itemize}

Recall that on a separable metric space $(X,d)$, a family $(\mu_t)_{t \geq 0}$ of finite Borel measures $\mu_t$ is said to \emph{converge weakly} to a finite Borel measure $\mu$ if the following equivalent statements hold:
\begin{itemize}
    \item for every bounded continuous function $g \colon X \to \mathbb{R}$, we have $\int g \, d\mu_t \to \int g \, d\mu$ as $t \to \infty$;
    \item for every bounded Lipschitz function $g \colon X \to \mathbb{R}$, we have $\int g \, d\mu_t \to \int g \, d\mu$ as $t \to \infty$;
    \item for every $A \in \mathcal{B}(X)$ with $\mu(\partial A)=0$, we have $\mu_t(A) \to \mu(A)$ as $t \to \infty$;
    \item $\mu_t(X) \to \mu(X)$ as $t \to \infty$ and for every open $U \subset X$, we have $\mu(U) \leq \liminf_{t \to \infty} \mu_t(U)$.
\end{itemize}

A `neighbourhood' of a set $S \subset X$ will mean a set containing an open set containing $S$.

\subsubsection{A preliminary lemma}

Given a measure space $(X,\mathcal{B},m)$,
\begin{itemize}
    \item for a set $C \subset L^1(m)$, we write $\langle C \rangle \subset L^1(m)$ for the additive semigroup generated by $C$,
    \item we write $L_+^1(m) := \{h \in L^1(m) \, : \, h \geq 0\}$,
    \item a \emph{subcone of $L_+^1(m)$} means a set $C \subset L_+^1(m)$ such that for every $h \in L_+^1(m)$ and $c>0$, $ch \in L_+^1(m)$.
\end{itemize}

\begin{lemma} \label{lemma:cone_conv}
Suppose we have a $\sigma$-finite measure space $(X,\mathcal{B},m)$, a probability measure $\mu$ on $(X,\mathcal{B})$, a subcone $C$ of $L_+^1(m)$ with $\langle C \rangle$ being dense in $L_+^1(m)$, a family $(f_t)_{t \geq 0}$ of measurable functions $f_t \colon X \to X$, and a bounded measurable function $g \colon X \to \mathbb{R}$, such that for every $m$-absolutely continuous probability measure $\nu$ with $\frac{d\nu}{dm} \in C$,
\begin{equation} \label{eq:lemma_conv} \int g \, d(f_t\nu) \to \int g \, d\mu \quad \textrm{as } t \to \infty. \end{equation}
Then Eq.~\eqref{eq:lemma_conv} holds for every $m$-absolutely continuous probability measure $\nu$ (even when $\frac{d\nu}{dm} \not\in C$).
\end{lemma}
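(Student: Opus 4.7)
The plan is to rewrite everything in terms of densities and then propagate the convergence from $C$ up to all of $L^1_+(m)$ using the cone structure, linearity, and the $L^1$-density of $\langle C\rangle$. Writing $h := d\nu/dm$, we have $\int g\,d(f_t\nu) = \int (g\circ f_t)\, h\,dm$, so the hypothesis is really a statement about the linear functionals $h\mapsto \int (g\circ f_t)\, h\,dm$ on $L^1(m)$. Boundedness of $g$ makes each of these functionals continuous with norm at most $M:=\sup_X|g|$, which will give the necessary uniform-in-$t$ continuity for the final density step.

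The first key step is to use the cone property to drop the requirement that the density be a probability density: for any nonzero $h\in C$, the element $h/\|h\|_{L^1}$ is again in $C$ and is a probability density, so by rescaling the hypothesis I obtain
\[
\int (g\circ f_t)\, h\,dm \;\longrightarrow\; \|h\|_{L^1}\int g\,d\mu \qquad \text{as } t\to\infty
\]
for every $h\in C$ (the case $h=0$ being trivial). Because $\|\cdot\|_{L^1}$ is additive on $L^1_+(m)$, summing finitely many such instances and using linearity of integration extends the same convergence to every $h\in\langle C\rangle$, with the same identification of the limit $\|h\|_{L^1}\int g\,d\mu$.

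Finally, I propagate to arbitrary $h\in L^1_+(m)$ by a standard three-$\epsilon$ argument. Given $\epsilon>0$, use density of $\langle C\rangle$ in $L^1_+(m)$ to pick $h'\in\langle C\rangle$ with $\|h-h'\|_{L^1}<\epsilon$. Then for every $t\geq 0$,
\[
\left|\int (g\circ f_t)\, h\,dm - \int (g\circ f_t)\, h'\,dm\right|\;\leq\; M\|h-h'\|_{L^1}\;<\;M\epsilon,
\]
while the reverse triangle inequality together with the bound $\bigl|\int g\,d\mu\bigr|\leq M$ gives $\bigl|\|h\|_{L^1}-\|h'\|_{L^1}\bigr|\cdot\bigl|\int g\,d\mu\bigr|\leq M\epsilon$. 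Combining these with the already-established convergence on $\langle C\rangle$ yields
\[
\limsup_{t\to\infty}\left|\int (g\circ f_t)\, h\,dm - \|h\|_{L^1}\!\int g\,d\mu\right|\;\leq\; 2M\epsilon,
\]
so letting $\epsilon\downarrow 0$ and specializing to $h=d\nu/dm$ (for which $\|h\|_{L^1}=1$) yields Eq.~\eqref{eq:lemma_conv}. There is no genuine obstacle here; the only subtlety worth flagging is that the cone hypothesis (as opposed to merely having $\langle C\rangle$ dense) is essential for passing from unit-mass densities in $C$ to arbitrary elements of $C$ before invoking linearity.
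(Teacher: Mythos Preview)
Your proof is correct and follows essentially the same approach as the paper: use the cone property to rescale, extend from $C$ to $\langle C\rangle$ by linearity, and then pass to all of $L^1_+(m)$ via the uniform-in-$t$ bound $\|g\circ f_t\|_\infty\le M$ together with the $L^1$-density of $\langle C\rangle$. The only cosmetic difference is that the paper keeps everything phrased in terms of probability densities (renormalizing the approximants), whereas you work with unnormalized nonnegative densities and specialize to $\|h\|_{L^1}=1$ at the end.
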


\begin{proof}

Note that $\langle C \rangle$ is itself a subcone of $L_+^1(m)$. For any $m$-absolutely continuous probability measure $\nu$ with $\frac{d\nu}{dm} \in \langle C \rangle$, writing
\[ \nu = \sum_{i=1}^n \nu_i \]
where the measures $\nu_i$ are $m$-absolutely continuous finite measures with $\nu_i(X)>0$ and $\frac{d\nu_i}{dm} \in C$, we have
\begin{align*}
   \int g \, d(f_t\nu)
   =  \sum_{i=1}^n \int g \, d(f_t\nu_i)
   = \; \sum_{i=1}^n \nu_i(X) \int g \, d\big(f_t\big(\tfrac{\nu_i}{\nu_i(X)}\big)\big)
   &\to \sum_{i=1}^n \nu_i(X) \int g \, d\mu \quad \textrm{by \eqref{eq:lemma_conv}} \\
   &= \nu(X) \int g \, d\mu = \int g \, d\mu.
\end{align*}
So we can assume without loss of generality that $C$ is itself dense in $L_+^1(m)$.

Hence the set $\{h \in C \, : \, \int h \, dm=1\}$ is dense in $\{h \in L_+^1(m) \, : \, \int h \, dm=1\}$, since for any $h \in L_+^1(m)$ with $\int h \, dm=1$ and any sequence $(h_n)$ in $C$ that is $L^1$-convergent to $h$, we have that $\int h_n \, dm \to 1$ and so $\frac{1}{\int h_n \, dm} h_n$ converges in $L^1$ to $h$.

Now fix any $m$-absolutely continuous probability measure $\nu$, and let $h=\frac{d\nu}{dm}$. Fix $\varepsilon>0$. We can find an $m$-absolutely continuous probability measure $\tilde{\nu}$ with $\tilde{h}\!:=\!\frac{d\tilde{\nu}}{dm} \in C$ and $\int_U |h-\tilde{h}| \, dm < \frac{\varepsilon}{2\|g\|_\infty}$. For sufficiently large $t$, we have
\[ \left|\int g \, d(f_t\tilde{\nu}) - \int g \, d\mu \right| \ < \ \tfrac{\varepsilon}{2} \]
and so
\begin{align*}
    \left|\int g \, d(f_t\nu) - \int g \, d\mu \right| \ &\leq \ \left|\int g \, d(f_t\nu) - \int g \, d(f_t\tilde{\nu}) \right| + \left|\int g \, d(f_t\tilde{\nu}) - \int g \, d\mu \right| \\
    &= \ \ \left|\int (g \circ f_t).(h-\tilde{h}) \, dm \right| \; \, + \; \, \left|\int g \, d(f_t\tilde{\nu}) - \int g \, d\mu \right| \\
    &< \ \|g\|_\infty\tfrac{\varepsilon}{2\|g\|_\infty} + \tfrac{\varepsilon}{2} \\
    &= \ \varepsilon. \qedhere
\end{align*}

\end{proof}

\section{Setup, definitions, and basic results} \label{sec:Basic}

\subsection{General setup of this paper} \label{sec:general_setup}

We will formulate our definitions and results for semiflows; these can all easily be adapted to the case of discrete-time maps.

Throughout this paper: $(X,d)$ is a separable metric space; $m$ is a locally finite Borel measure on $X$ with full support in $X$; and $(f^t)_{t \geq 0}$ is a semiflow of maps $f^t \colon X \to X$ such that $(t,x) \mapsto f^tx$ is continuous and $X$ can be covered by open sets $U$ with the property that
    \begin{itemize}
        \item[(H)] for each $t \geq 0$, the measure $S \mapsto m(U \cap f^{-t}S)$ is $m$-absolutely continuous with bounded density.
    \end{itemize}
Note that a finite union of sets satisfying (H) also satisfies (H), since $m\big((U \cup V) \cap f^{-t}S\big) \leq m(U \cap f^{-t}S) + m(V \cap f^{-t}S)$.

Given $x \in X$, we write $B_\delta(x)$ for the open $\delta$-ball around $x$, and given $A \subset X$, we write $B_\delta(A)=\bigcup_{x \in A}B_\delta(x)$.

The above general setup is fulfilled by and generalises the following scenario: $X$ is a finite-dimensional Riemannian manifold, with $d$ the geodesic distance function and $m$ the Riemannian volume measure; and $(f^t)_{t \geq 0}$ is a semiflow of $C^1$ local diffeomorphisms $f^t \colon X \to X$ such that $(t,x) \mapsto f^tx$ is continuous. In this scenario, every open set with compact closure fulfils (H).

\begin{lemma} \label{lemma:nonsing}
For every $m$-absolutely continuous measure $\nu$ and every $t \geq 0$, $f^t\nu$ is $m$-absolutely continuous.
\end{lemma}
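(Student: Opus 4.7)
The plan is to reduce the global absolute continuity of $f^t\nu$ to the local condition (H) by using a countable subcover. Recall that a separable metric space is second countable, hence Lindelöf, so from the given open cover of $X$ by sets satisfying (H) we can extract a countable subcover $\{U_n\}_{n \in \mathbb{N}}$ with each $U_n$ satisfying (H) for every $t \geq 0$.

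Fix any $t \geq 0$ and any Borel set $S$ with $m(S) = 0$; it suffices to show $(f^t\nu)(S) = \nu(f^{-t}S) = 0$. For each $n$, property (H) gives a bounded measurable density $\rho_{n,t}$ such that $m(U_n \cap f^{-t}S') = \int_{S'} \rho_{n,t}\,dm$ for all Borel $S'$, so
\[
m(U_n \cap f^{-t}S) \;=\; \int_S \rho_{n,t}\,dm \;\leq\; \|\rho_{n,t}\|_\infty\, m(S) \;=\; 0.
\]
Since $\nu \ll m$, this forces $\nu(U_n \cap f^{-t}S) = 0$ for every $n$.

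Finally, because $\{U_n\}$ covers $X$, we have $f^{-t}S = \bigcup_n (U_n \cap f^{-t}S)$, so countable subadditivity yields
\[
\nu(f^{-t}S) \;\leq\; \sum_{n=1}^\infty \nu(U_n \cap f^{-t}S) \;=\; 0,
\]
completing the argument. There is really no serious obstacle here: the only mildly non-routine ingredient is noticing that separability lets one pass from the abstract open cover hypothesis to a genuinely countable family, at which point the bounded-density form of (H) combined with $\nu \ll m$ gives the result by a direct computation.
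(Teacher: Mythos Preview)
Your proof is correct and follows essentially the same approach as the paper: extract a countable subcover $\{U_n\}$ by separability/Lindel\"of, use (H) to get $m(U_n \cap f^{-t}S)=0$ for each $n$ whenever $m(S)=0$, deduce $\nu(U_n \cap f^{-t}S)=0$ from $\nu \ll m$, and conclude by countable subadditivity. The paper's version is simply a terser rendering of the same argument.
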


\begin{proof}
Every open cover of a separable metric space has a countable subcover, so let $\{U_n\}_{n \geq 1}$ be a countable open cover of $X$ by sets fulfilling (H). For any $m$-null set $S$ we have $f^t\nu(S) = \nu(f^{-t}S) \leq \sum_{n=1}^\infty \nu(U_n \cap f^{-t}S)=0$, since $m(U_n \cap f^{-t}S)=0$ for each $n$.
\end{proof}

\begin{lemma} \label{lemma:bd}
Let $U \subset X$ be an open set fulfilling \emph{(H)}. Then for any $m$-absolutely continuous measure $\nu$ with bounded density such that $\nu(X \setminus U)=0$, for every $t \geq 0$, $f^t\nu$ is $m$-absolutely continuous with bounded density.
\end{lemma}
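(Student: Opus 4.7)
The plan is to turn the hypothesis that $\nu$ is concentrated on $U$ and has bounded density into a direct pointwise domination of $f^t\nu$ by the reference measure $S \mapsto \|h\|_\infty \, m(U \cap f^{-t}S)$, whose density is bounded by (H).

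First, I would write $h := \frac{d\nu}{dm} \in L^\infty(m)$. Since $\nu(X \setminus U)=0$ and $h \geq 0$, we have $h = 0$ $m$-a.e.\ on $X \setminus U$. Hence for any Borel set $S \subset X$,
\[
f^t\nu(S) \;=\; \nu(f^{-t}S) \;=\; \int_{f^{-t}S} h \, dm \;=\; \int_{U \cap f^{-t}S} h \, dm \;\leq\; \|h\|_\infty \cdot m(U \cap f^{-t}S).
\]
By hypothesis (H), the measure $S \mapsto m(U \cap f^{-t}S)$ is $m$-absolutely continuous with some bounded density $\rho_t \in L^\infty(m)$, so the right-hand side equals $\|h\|_\infty \int_S \rho_t \, dm$.

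Next, Lemma~\ref{lemma:nonsing} already gives that $f^t\nu$ is $m$-absolutely continuous, so let $\phi_t := \frac{d(f^t\nu)}{dm}$. The inequality $\int_S \phi_t \, dm \leq \int_S \|h\|_\infty \rho_t \, dm$ for every Borel $S$ forces $\phi_t \leq \|h\|_\infty \rho_t$ $m$-almost everywhere; in particular $\phi_t \in L^\infty(m)$, with $\|\phi_t\|_\infty \leq \|h\|_\infty \|\rho_t\|_\infty$.

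There is no real obstacle here: the argument is essentially a one-line calculation, the only subtlety being to remember that $\nu(X \setminus U)=0$ together with $h \geq 0$ yields $h = 0$ $m$-a.e.\ on $X \setminus U$ (which uses the absolute continuity of $\nu$ w.r.t.\ $m$), allowing the integration domain to be restricted to $U \cap f^{-t}S$ before invoking~(H).
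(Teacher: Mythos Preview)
Your proof is correct and follows essentially the same route as the paper's: both establish the chain $f^t\nu(S) = \nu(U \cap f^{-t}S) \leq \|h\|_\infty\, m(U \cap f^{-t}S) \leq \|h\|_\infty \|\rho_t\|_\infty\, m(S)$, with the paper simply phrasing the two bounds as constants $C$ and $D$ rather than as $L^\infty$-norms of densities. The invocation of Lemma~\ref{lemma:nonsing} is harmless but unnecessary, since the set-inequality $f^t\nu(S) \leq \|h\|_\infty \|\rho_t\|_\infty\, m(S)$ already yields absolute continuity with bounded density directly.
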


\begin{proof}
Let $C > 0$ be such that for all $S \in \mathcal{B}(X)$, $\nu(S) \leq C\mu(S)$. Fix $t \geq 0$, and let $D > 0$ be such that for all $S \in \mathcal{B}(X)$, $m(U \cap f^{-t}S) \leq Dm(S)$. Then for all $S \in \mathcal{B}(X)$,
\[ f^t\nu(S) = \nu(f^{-t}S) = \nu(U \cap f^{-t}S) \leq Cm(U \cap f^{-t}S) \leq (CD)m(S). \qedhere \]
\end{proof}

\subsection{Ergodic measures and physical measures}

Throughout this paper, an \emph{invariant measure} means a Borel probability measure $\mu$ on $X$ such that for all $t \geq 0$, $f^t\mu=\mu$.

\begin{defi}
The \emph{basin} of a Borel probability measure $\mu$ on $X$ is given by
\[ \mathrm{Basin}(\mu) = \left\{ x \in X \, : \, \frac{1}{T} \int_0^T \delta_{f^tx}(\cdot) \, dt \, \to \, \mu \ \textrm{ weakly as $T \to \infty$} \right\}. \]
\end{defi}

\begin{rmk} \label{rmk:inv}
It is not hard to show that if $\mathrm{Basin}(\mu) \neq \emptyset$ then $\mu$ is an invariant measure.
\end{rmk}

Now there are about a dozen important equivalent definitions of an ergodic measure. A commonly given definition (applicable to any measurable semiflow on a measurable space) would be the following.

\begin{defi}
An \emph{ergodic measure} is an invariant measure $\mu$ such that for any $A \in \mathcal{B}(X)$ with the property that $f^{-t}A = A$ for all $t \geq 0$, we have $\mu(A) \in \{0,1\}$.
\end{defi}

For measurable semiflows on separable metric spaces (such as in our general setup), one of the equivalent definitions of ergodicity---perhaps the closest reflection of the original conception of ergodicity in thermodynamics---is as follows (cf.~\cite[Exercise~4.1.5]{VO:16}):

\begin{prop} \label{prop:ergequiv}
A Borel probability measure $\mu$ on $X$ is an ergodic measure if and only if $\mu$-almost every point in $X$ is in $\textrm{Basin}(\mu)$.
\end{prop}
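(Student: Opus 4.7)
The plan is to prove the two directions separately. For the forward implication, assume $\mu$ is ergodic. The difficulty is that weak convergence of probability measures requires convergence of $\int g \, d\mu_T$ for \emph{all} bounded continuous $g$, whereas Birkhoff's ergodic theorem supplies $\mu$-almost-sure convergence only one function at a time, and a union of uncountably many null sets need not be null. I will use the standard fact that since $(X,d)$ is separable, there exists a countable family $\{g_n\}$ of bounded Lipschitz functions that is convergence-determining for weak convergence of Borel probability measures; such a family can be built from cutoffs of distances to a countable dense subset of $X$. Applying the Birkhoff ergodic theorem for semiflows to each $g_n$ produces a full-$\mu$-measure set $X_n$ on which $\frac{1}{T}\int_0^T g_n(f^tx)\,dt \to \int g_n\,d\mu$. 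The countable intersection $\bigcap_n X_n$ still has full $\mu$-measure, and every point in it lies in $\mathrm{Basin}(\mu)$.

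For the reverse implication, assume $\mu$-a.e.\ point lies in $\mathrm{Basin}(\mu)$; by Remark~\ref{rmk:inv}, $\mu$ is invariant. Suppose for contradiction that $\mu$ is not ergodic, so there exists a measurable $A$ with $f^{-t}A = A$ for all $t \geq 0$ and $0 < \mu(A) < 1$. Define the conditional probability $\mu_A(S) := \mu(S \cap A)/\mu(A)$. Using $A = f^{-t}A$ one checks $A \cap f^{-t}S = f^{-t}(A \cap S)$, and combined with invariance of $\mu$ this gives $f^t\mu_A = \mu_A$. Since $\mu_A \ll \mu$, we still have $\mu_A$-a.e.\ $x \in \mathrm{Basin}(\mu)$.

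Now fix a bounded continuous $g$. For $\mu_A$-a.e.\ $x$, $\frac{1}{T}\int_0^T g(f^tx)\,dt \to \int g\,d\mu$, and since these integrands are bounded by $\|g\|_\infty$, the bounded convergence theorem yields
\begin{equation*}
\int \left( \frac{1}{T}\int_0^T g(f^tx) \, dt \right) d\mu_A(x) \;\longrightarrow\; \int g \, d\mu.
\end{equation*}
By Fubini (justified since $(t,x) \mapsto f^tx$ is continuous, hence measurable) and invariance of $\mu_A$, the left-hand side equals $\int g\,d\mu_A$ for every $T$. Hence $\int g \, d\mu_A = \int g \, d\mu$ for every bounded continuous $g$, so $\mu_A = \mu$, contradicting $\mu_A(A) = 1 > \mu(A)$.

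The main obstacle is in the forward direction: one must invoke the existence of a countable convergence-determining family of bounded continuous functions on the separable metric space $X$, together with the Birkhoff ergodic theorem for measurable semiflows. The reverse direction is then comparatively routine, provided one passes through the conditional measure $\mu_A$ rather than attempting to argue directly with indicator functions of invariant sets (which are not continuous and so cannot be handled by the weak-convergence definition of $\mathrm{Basin}(\mu)$ alone).
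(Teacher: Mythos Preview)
Your proof is correct and follows essentially the same approach as the paper. The forward direction is identical (countable convergence-determining family plus Birkhoff); for the reverse direction, the paper works with the unnormalised measures $\mu_1(S)=\mu(A)\mu(S)$ and $\mu_2(S)=\mu(A\cap S)$ and shows directly that $\mu(A)^2=\mu(A)$, whereas you normalise to $\mu_A$ and argue by contradiction, but the underlying computation (dominated convergence, Fubini, invariance of $A$ and of $\mu$) is the same.
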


\begin{proof}
For a separable metric space, weak convergence can be determined by countable family of bounded continuous functions $\{g_n\}_{n \in \mathbb{N}}$. Suppose $\mu$ is ergodic; then Birkhoff's Ergodic Theorem gives that for $\mu$-almost all $x \in X$, we have $\frac{1}{T}\int_0^T g_n(f^tx) \, dt \to \int_X g_n \, d\mu$ as $T \to \infty$ for all $n$, and hence $x \in \mathrm{Basin}(\mu)$. Now conversely, suppose that $\mu$-almost every point in $X$ is in $\textrm{Basin}(\mu)$. So by Remark~\ref{rmk:inv}, $\mu$ is invariant. Fix a set $A \in \mathcal{B}(X)$ such that $f^{-t}A=A$ for all $t \geq 0$. Define the finite measures $\mu_1$ and $\mu_2$ on $X$ by
\begin{align*}
    \mu_1(S) &= \mu(A)\mu(S) \\
    \mu_2(S) &= \mu(A \cap S)
\end{align*}
for all $S \in \mathcal{B}(X)$. Then for every bounded continuous function $g \colon X \to \mathbb{R}$ we have
\begin{align*}
    \int_X g \, d\mu_1 &= \mu(A)\int_X g(y) \, \mu(dy) \\
    &= \int_A \left( \int_X g(y) \, \mu(dy) \right) \, \mu(dx) \\
    &= \int_A \left( \lim_{T \to \infty} \frac{1}{T} \int_0^T g(f^tx) \, dt \right) \mu(dx) \quad \textrm{since $\mathrm{Basin}(\mu)$ is a $\mu$-full set} \\
    &= \lim_{T \to \infty} \int_A \left(\frac{1}{T} \int_0^T g(f^tx) \, dt \right) \mu(dx) \quad \textrm{by dominated convergence} \\
    &= \lim_{T \to \infty} \frac{1}{T} \int_0^T \left( \int_A g(f^tx) \, \mu(dx) \right) dt \quad \textrm{by Fubini's theorem} \\
    &= \lim_{T \to \infty} \frac{1}{T} \int_0^T \left( \int_{f^{-t}A} g(f^tx) \, \mu(dx) \right) dt \quad \textrm{since $f^{-t}A=A$} \\
    &= \lim_{T \to \infty} \frac{1}{T} \int_0^T \left( \int_A g(x) \, \mu(dx) \right) dt \quad \textrm{since $f^t\mu=\mu$} \\
    &= \int_A g(x) \, \mu(dx) \\
    &= \int_X g \, d\mu_2.
\end{align*}
Hence $\mu_1=\mu_2$, and so in particular, $\mu(A)^2 = \mu_1(A) = \mu_2(A) = \mu(A)$, and so $\mu(A) \in \{0,1\}$.
\end{proof}

Conceptually, a physical measure represents a more ``experimentally accessible'' version~\cite{Baladi:2002} of the kind of statistical dynamics depicted in the characterisation of ergodicity in Proposition~\ref{prop:ergequiv}. The definition is as follows.

\begin{defi}
A \emph{physical measure} is a compactly supported Borel probability measure $\mu$ on $X$ such that $\textrm{Basin}(\mu)$ includes $m$-almost all points in a neighbourhood of the support of $\mu$.
\end{defi}

\begin{rmk} \label{rmk:weaker_physical}
A commonly found weaker definition of a physical measure~\cite{Young:2002} is a compactly supported Borel probability measure $\mu$ on $X$ such that $m\big(\textrm{Basin}(\mu)\big)>0$. Since the original work of Sinai, Ruelle and Bowen, more general definitions of SRB measures have been formulated beyond the Axiom~A setting~\cite{Young:2002,Young:2016}, and it has been shown in such a broader setting that SRB measures are physical at least in this weaker sense~\cite{Pugh:1989}.
\end{rmk}

\subsection{Mixing measures and attracting measures}

Note that if $\mu$ and $\nu$ are Borel probability measures on $X$ such that $f^t\nu \to \mu$ weakly as $t \to \infty$, then $\mu$ is an invariant measure.

\begin{defi}
A \emph{mixing measure} is a Borel probability measure $\mu$ on $X$ such that the following equivalent statements hold:
\begin{enumerate}[\indent (i)]
    \item for all $A,B \in \mathcal{B}(X)$, $\mu(A \cap f^{-t}B) \to \mu(A)\mu(B) \textrm{ as } t \to \infty$;
    \item for every $\mu$-absolutely continuous probability measure $\tilde{\mu}$, we have $f^t\tilde{\mu} \to \mu$ strongly as $t \to \infty$;
    \item for every $\mu$-absolutely continuous probability measure $\tilde{\mu}$, we have $f^t\tilde{\mu} \to \mu$ weakly as $t \to \infty$.
\end{enumerate}
\end{defi}

It is well-known that every mixing measure is ergodic.

\begin{rmk}
Let us briefly describe how the non-trivial implications between (i), (ii) and (iii) are derived:
\begin{itemize}
    \item (ii) follows from (i) by Lemma~\ref{lemma:cone_conv} with $C=\{c\mathbbm{1}_A : c \geq 0, A \in \mathcal{B}(X)\}$ and $g=\mathbbm{1}_B$ for each $B \in \mathcal{B}(X)$, with the measures `$m$' and `$\mu$' both set to $\mu$.
    \item (i) follows from (iii) by the $\pi$-$\lambda$ theorem, using the facts that: the set
    \[ \Pi := \{B \in \mathcal{B}(X) \, : \, \mu(\partial B)=0\}\]
    is closed under pairwise intersections (since $\partial(A_1 \cap A_2) \subset (\partial A_1) \cup (\partial A_2)$) and $\Pi$ is a generator of $\mathcal{B}(X)$ (since it includes the open ball of all but countably many radii about each point in $X$); and if (iii) holds then $\Pi$ is contained in the set
    \[ \Lambda := \{B \in \mathcal{B}(X) \, : \, \forall \, A \in \mathcal{B}(X), \ \mu(A \cap f^{-t}B) \to \mu(A)\mu(B)\}, \]
    and $\Lambda$ is closed under countable disjoint unions by the discrete dominated convergence theorem since
    \[ \mu(A \cap f^{-t}B) \leq \mu(f^{-t}B) = \mu(B). \]
\end{itemize}
\end{rmk}

Conceptually, an attracting measure represents a more ``experimentally accessible'' version~\cite{Baladi:2002} of the kind of decay of correlations depicted in the notion of a mixing measure (see also Remark~\ref{rmk:corr}). The definition is as follows.

\begin{defi} \label{def:attracting}
An \emph{attracting measure} is a compactly supported Borel probability measure $\mu$ on $X$ whose support admits an open neighbourhood $U$ such that for every $m$-absolutely continuous probability measure $\nu$ with $\nu(U)=1$, we have that $f^t\nu \to \mu$ weakly as $t \to \infty$.
\end{defi}

\begin{rmk} \label{rmk:weak_attracting}
Along the lines of Remark~\ref{rmk:weaker_physical}, one could give a weaker definition of an attracting measure, where the set $U$ in Definition~\ref{def:attracting} is not required to be a neighbourhood of $\mathrm{supp}\,\mu$ but is allowed to be any $m$-positive measure set.
\end{rmk}

\begin{rmk} \label{rmk:corr}
For any finite measure $\nu$ on $X$, the set of bounded continuous functions from $X$ to $\mathbb{R}$ is a dense subset of $L^1(\nu,\mathbb{R})$~(\cite[Theorem~11.5]{Driver:2003}). Hence, by Lemma~\ref{lemma:cone_conv} (with $C$ being the set of nonnegative-valued bounded continuous functions) mixing measures and attracting measures can be understood in terms of \emph{correlations for continuous observables}:
\begin{itemize}
    \item A probability measure $\mu$ is mixing if and only if for all bounded continuous functions $g_1,g_2 \colon X \to \mathbb{R}$ the \emph{``classical'' correlation}~\cite{Baladi:2002}
    \[ \mathrm{Covar}_\mu[g_1, g_2 \circ f^t] =\! \int_X g_1(x) g_2(f^tx) \, \mu(dx) - \int_X g_1 \, d\mu \int_X g_2 \, d\mu \]
    tends to $0$ as $t \to \infty$.
    \item A probability measure $\mu$ with compact support $A$ is attracting if and only if there is an open neighbourhood $U$ of $A$ with $m(U)<\infty$ such that for all bounded continuous functions $g_1,g_2 \colon U \to \mathbb{R}$ the \emph{operational correlation}~\cite{Baladi:2002}
    \[ \int_U g_1(x) g_2(f^tx) \, m(dx) - \int_U g_1 \, dm \int_A g_2 \, d\mu \]
    tends to $0$ as $t \to \infty$.
\end{itemize}
\end{rmk}

\begin{rmk}
A possible reason why physical measures have received more attention than attracting measures is computational: If $\mu$ is a physical measure then for a typical initial condition $x$ near the support of $\mu$, we can numerically approximate $\mu$ just by simulating the single trajectory $f^tx$ up to a sufficiently large time, whereas by contrast, in order to use the attracting-measure property to numerically approximate an attracting measure $\mu$, we need to simulate a sufficiently large ensemble of trajectories up to a sufficiently large time. Interestingly, however, when we extend to nonautonomous settings such as in \cite{ashwin2021physical,newman23}, we can no longer use just a single trajectory to approximate a time-dependent physical measure at a given time; this is because the approximating measure is not concentrated on a set produced by an iteration rule of type $f_{n+1}(x) = \xi_n(f_n(x))$, but rather is concentrated on a set produced by an iteration rule of type $f_{n+1}(x)=f_n(\xi_n(x))$. (Switching the latter for the former would result in inefficiently approximating the physical measure of the past-limiting autonomous system, rather than approximating the physical measure of the nonautonomous system.)
\end{rmk}

\subsection{Physical and attracting measures supported on attractors}

\begin{defi}
A non-empty compact set $A \subset X$ is said to be \emph{invariant} if for all $t \geq 0$, $f^tA=A$.
\end{defi}

\begin{defi}
The \emph{basin} of a non-empty compact invariant set $A \subset X$ is given by
\[ \mathrm{Basin}(A) = \left\{ x \in X \, : \, d(f^tx,A) \to 0 \ \textrm{ as $t \to \infty$} \right\}. \]
\end{defi}

Now there are various non-equivalent definitions of an attractor; here, we will work with a fairly weak definition:

\begin{defi} \label{def:attractor}
An \emph{attractor} is a non-empty compact invariant set $A$ such that $\mathrm{Basin}(A)$ is a neighbourhood of $A$.
\end{defi}

\begin{prop}
If $A$ is an attractor then $\mathrm{Basin}(A)$ is open.
\end{prop}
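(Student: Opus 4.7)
The plan is to use the fact that $\mathrm{Basin}(A)$ contains an open neighbourhood $V$ of $A$ (by the definition of an attractor), combined with continuity of the semiflow, to ``pull back'' this open neighbourhood to an open neighbourhood of any given point in $\mathrm{Basin}(A)$.

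Concretely, fix $x \in \mathrm{Basin}(A)$. By hypothesis, choose an open set $V$ with $A \subset V \subset \mathrm{Basin}(A)$. Since $d(f^tx, A) \to 0$, there exists $T \geq 0$ with $f^Tx \in V$. By continuity of the map $y \mapsto f^Ty$ (which follows from the joint continuity of $(t,y) \mapsto f^ty$ assumed in the general setup), there is an open neighbourhood $W$ of $x$ such that $f^TW \subset V \subset \mathrm{Basin}(A)$.

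It remains to observe that $\mathrm{Basin}(A)$ is ``backward closed'' under the semiflow in the following sense: if $f^Ty \in \mathrm{Basin}(A)$ then $y \in \mathrm{Basin}(A)$, because $d(f^sy,A) = d(f^{s-T}(f^Ty),A) \to 0$ as $s \to \infty$. Applied to each $y \in W$, this gives $W \subset \mathrm{Basin}(A)$, showing that $x$ is an interior point of $\mathrm{Basin}(A)$. Since $x$ was arbitrary, $\mathrm{Basin}(A)$ is open.

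There is no real obstacle here; the only subtlety is remembering to invoke continuity of $f^T$ (justified by the standing assumption on $(t,x) \mapsto f^tx$) and the elementary observation about backward pull-back of the basin, both of which are straightforward.
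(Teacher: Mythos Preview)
Your proof is correct and follows essentially the same idea as the paper's: the paper simply records the identity $\mathrm{Basin}(A) = \bigcup_{t \geq 0} f^{-t}V$ for an open $V$ with $A \subset V \subset \mathrm{Basin}(A)$, which is exactly your pointwise argument packaged as a union of open preimages. The two proofs use the same ingredients (continuity of each $f^t$ and backward-invariance of the basin) and differ only in presentation.
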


\begin{proof}
Letting $V$ be an open neighbourhood of $A$ contained in $\mathrm{Basin}(A)$, we have that
\[ \mathrm{Basin}(A) = \bigcup_{t \geq 0} f^{-t}V. \qedhere \]
\end{proof}

For measures whose support is an attractor, one can define ``physical'' and ``attracting'' in terms of the basin of the support rather than just in terms of an arbitrary neighbourhood:

\begin{prop} \label{prop:basin}
Let $\mu$ be a Borel probability measure whose support is an attractor. Then
\begin{enumerate}[\indent (A)]
    \item $\mu$ is physical if and only if $\mathrm{Basin}(\mu)$ includes $m$-almost all points in $\mathrm{Basin}(\mathrm{supp}\,\mu)$;
    \item $\mu$ is attracting if and only if for every $m$-absolutely continuous probability measure $\nu$ with $\nu(\mathrm{Basin}(\mu))=1$, we have that $f^t\nu \to \mu$ weakly as $t \to \infty$.
\end{enumerate}
\end{prop}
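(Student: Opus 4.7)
The plan is to handle each direction of each equivalence in turn, writing $A = \mathrm{supp}\,\mu$ for brevity, so that $A$ is an attractor and, by the preceding proposition, $\mathrm{Basin}(A)$ is an open neighbourhood of $A$. Two preliminary observations will be used throughout. \emph{(i)}~The definition of $\mathrm{Basin}(\mu)$ implies $f^{-t}\mathrm{Basin}(\mu) = \mathrm{Basin}(\mu)$ for every $t \geq 0$, since replacing $x$ by $f^t x$ changes the empirical average $\tfrac{1}{T}\int_0^T \delta_{f^s x}\,ds$ by a term whose total variation is $O(1/T)$. \emph{(ii)}~By separability we may pick a countable open cover of $X$ by sets satisfying (H); summing over this cover shows that $m(f^{-t}N) = 0$ for every $m$-null Borel set $N$ and every $t \geq 0$.

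For part (A), the ``if'' direction is immediate, taking $U := \mathrm{Basin}(A)$ in the definition of ``physical''. For the ``only if'' direction, let $U$ be the open neighbourhood supplied by the definition and set $N := U \setminus \mathrm{Basin}(\mu)$, so that $m(N) = 0$. For any $x \in \mathrm{Basin}(A) \setminus \mathrm{Basin}(\mu)$, the fact that $f^t x \to A$ together with openness of $U$ furnishes some rational $q \geq 0$ with $f^q x \in U$; by observation \emph{(i)} we also have $f^q x \notin \mathrm{Basin}(\mu)$, so $f^q x \in N$. Hence
\[ \mathrm{Basin}(A) \setminus \mathrm{Basin}(\mu) \ \subset \ \bigcup_{q \in \mathbb{Q}_{\geq 0}} f^{-q}N, \]
a countable union of $m$-null sets by observation \emph{(ii)}.

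For part (B), the ``if'' direction is handled by picking any open neighbourhood $V$ of $A$ contained in $\mathrm{Basin}(A)$: every $m$-absolutely continuous probability $\nu$ with $\nu(V) = 1$ satisfies $\nu(\mathrm{Basin}(A)) = 1$, so the hypothesis yields the weak convergence demanded of an attracting measure. For the ``only if'' direction, let $U$ be the neighbourhood supplied by the attracting property, fix an $m$-absolutely continuous probability $\nu$ with $\nu(\mathrm{Basin}(A)) = 1$, and introduce the \emph{sojourn sets}
\[ F_n \ := \ \bigcap_{k \in \mathbb{N},\, k \geq n} f^{-k}(U) \qquad (n \in \mathbb{N}). \]
These are increasing Borel sets, and since $f^k x \in U$ for all large integers $k$ whenever $x \in \mathrm{Basin}(A)$, one has $\nu(F_n) \to 1$. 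For $n$ with $\nu(F_n) > 0$, define $\nu_n := \nu(F_n)^{-1}\nu|_{F_n}$; then $f^n \nu_n$ is $m$-absolutely continuous by Lemma~\ref{lemma:nonsing}, and it is supported in $U$ because $F_n \subset f^{-n}(U)$, so the attracting property gives $f^t \nu_n \to \mu$ weakly. Decomposing $\nu = \nu(F_n)\nu_n + \nu|_{X \setminus F_n}$ and applying $f^t$, for any bounded continuous $g \colon X \to \mathbb{R}$ we obtain
\[ \Big|\!\int g\,d(f^t\nu) - \int g\,d\mu\Big| \ \leq \ \Big|\!\int g\,d(f^t\nu_n) - \int g\,d\mu\Big| + 2\|g\|_\infty(1 - \nu(F_n)), \]
whence $\limsup_{t \to \infty}$ of the left-hand side is at most $2\|g\|_\infty(1 - \nu(F_n))$, which vanishes as $n \to \infty$. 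The main point of care in the whole argument is the measurability of the sojourn sets $F_n$, which is precisely the reason for intersecting only over integer times; once this is in place, the rest is a routine approximation.
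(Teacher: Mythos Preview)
Your argument is correct. Part~(A) follows essentially the same route as the paper: both show that $\mathrm{Basin}(A)\setminus\mathrm{Basin}(\mu)$ is contained in a countable union of preimages of the $m$-null set $U\setminus\mathrm{Basin}(\mu)$, invoking the invariance $f^{-t}\mathrm{Basin}(\mu)=\mathrm{Basin}(\mu)$ and the nonsingularity coming from~(H). Your use of rational times rather than integers is a cosmetic difference.

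Part~(B), however, proceeds differently from the paper. The paper observes that $\mathrm{Basin}(A)=\bigcup_{n\geq 0} f^{-n}U$, notes that any $m$-absolutely continuous probability $\nu$ concentrated on a single $f^{-n}U$ satisfies $f^t\nu\to\mu$, and then invokes the density Lemma~\ref{lemma:cone_conv} with the cone $C=\bigcup_n\{c\,\tfrac{d\nu}{dm}:c\geq 0,\ \nu(f^{-n}U)=1\}$ to pass to arbitrary $\nu$ supported on $\mathrm{Basin}(A)$. You instead build an explicit approximation scheme: the increasing sojourn sets $F_n=\bigcap_{k\geq n}f^{-k}(U)$ exhaust $\mathrm{Basin}(A)$, so the normalised restrictions $\nu_n$ have $f^n\nu_n$ supported in $U$ and hence $f^t\nu_n\to\mu$; a direct error estimate then transfers this to $\nu$. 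Your route is more self-contained, avoiding the general $L^1$-density lemma altogether, while the paper's route shows how that lemma packages such approximation arguments once and for all. Both achieve the same conclusion with comparable effort; the paper's version has the advantage of reusing machinery already set up, whereas yours would read naturally even without Lemma~\ref{lemma:cone_conv} on hand.
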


\begin{proof}
The `if' directions are immediate from the definitions.

(A) Suppose $\mu$ is physical. Let $U$ be an open neighbourhood of $\mathrm{supp}\,\mu$ contained in $\mathrm{Basin}(\mathrm{supp}\,\mu)$ such that $m$-almost every point in $U$ is in $\mathrm{Basin}(\mu)$. So $U \setminus \mathrm{Basin}(\mu)$ is an $m$-null set, and so by Lemma~\ref{lemma:nonsing}, for each $n \geq 0$,
\[ f^{-n}\big( U \setminus \mathrm{Basin}(\mu) \big) \]
is also an $m$-null set. But it is not hard to show that $f^{-n}(\mathrm{Basin}(\mu))=\mathrm{Basin}(\mu)$ for each $n$, and $\mathrm{Basin}(\mathrm{supp}\,\mu) = \bigcup_{n=0}^\infty f^{-n}U$. So
\[ \mathrm{Basin}(\mathrm{supp}\,\mu) \setminus \mathrm{Basin}(\mu) = \bigcup_{n=0}^\infty f^{-n}\big( U \setminus \mathrm{Basin}(\mu) \big). \]
So $\mathrm{Basin}(\mathrm{supp}\,\mu) \setminus \mathrm{Basin}(\mu)$ is an $m$-null set.

(B) Suppose $\mu$ is attracting. Let $U$ be an open neighbourhood of $\mathrm{supp}\,\mu$ contained in $\mathrm{Basin}(\mathrm{supp}\,\mu)$ such that for every $m$-absolutely continuous probability measure $\nu$ with $\nu(U)=1$, we have that $f^t\nu \to \mu$ weakly as $t \to \infty$. For each $n \geq 0$, let $\mathcal{C}_n$ be the set of $m$-absolutely continuous probability measures $\nu$ with $\nu(f^{-n}U)=1$. For every $\nu \in \mathcal{C}_n$, we have $(f^n\nu)(U)=1$ and $f^n\nu$ is $m$-absolutely continuous (by Lemma~\ref{lemma:nonsing}), and so $f^{t+n}\nu=f^t(f^n\nu) \to \mu$ weakly as $t \to \infty$, i.e.\ $f^t\nu \to \mu$ weakly as $t \to \infty$. Now since $\mathrm{Basin}(\mathrm{supp}\,\mu) = \bigcup_{n=0}^\infty f^{-n}U$, we can apply Lemma~\ref{lemma:cone_conv} with
\[ C = \bigcup_{n=0}^\infty \big\{ c\tfrac{d\nu}{dm} \, : \, c \geq 0, \, \nu \in \mathcal{C}_n \big\} \]
to conclude that for every $m$-absolutely continuous probability measure $\nu$ with $\nu(\mathrm{Basin}(\mu))=1$, we have that $f^t\nu \to \mu$ weakly as $t \to \infty$.
\end{proof}

\subsection{Non-implications among properties of measures} \label{sec:nonergodic}

We have defined ergodic measures, physical measures, mixing measures, and attracting measures. As we have indicated, all of these are invariant measures, and all mixing measures are ergodic measures.

Regarding non-implications:
\begin{itemize}
    \item For a constant speed flow on the circle, the normalised Lebesgue measure is ergodic and physical but neither mixing nor attracting.
    \item A Dirac mass on a repelling fixed point is mixing but neither attracting nor physical.
    \item It is well-known that physical measures can fail to be ergodic~\cite{MunozYoung:2007}. The classic example consists of an attracting heteroclinic cycle between two hyperbolic fixed points $\mathbf{p}_1$ and $\mathbf{p}_2$ in the plane, so that trajectories attracted to the cycle spend asymptotically $\lambda$ proportion of the time near $\mathbf{p}_1$ and $1-\lambda$ proportion of the time near $\mathbf{p}_2$, for some constant $0<\lambda<1$. In this case $\lambda\delta_{\mathbf{p}_1}+(1-\lambda)\delta_{\mathbf{p}_2}$ is physical but not ergodic.
\end{itemize}

We now address non-implications of the attracting-measure property.

For convenience, define $\mathcal{F} \colon (1,\infty) \to (0,\infty)$ by $\mathcal{F}(t) = \frac{1}{t\log t}$; so $\mathcal{F}'(t)=-\frac{1+\log t}{t^2(\log t)^2}$, and $\mathcal{F}$ has a well-defined inverse function $\mathcal{F}^{-1} \colon (0,\infty) \to (1,\infty)$.

\begin{prop} \label{prop:counterex}
Suppose we have a compact Riemannian manifold $\tilde{X}$ and a $C^1$ vector field $\tilde{b}$ on $\tilde{X}$ whose flow $(\tilde{f}^t)_{t \in \mathbb{R}}$ admits a mixing measure $\tilde{\mu}$ that is equivalent to the Riemannian volume measure on $\tilde{X}$. Let $X=\tilde{X} \times \mathbb{R}$, let $\mu=\tilde{\mu} \otimes \delta_0$, and define
\[ b \colon X \to TX \cong (T\tilde{X}) \times \mathbb{R} \]
by
\[ b(x,y) = \begin{cases}
    (\mathbf{0},0) & y=0 \\
    \big(y\tilde{b}(x),\mathrm{sgn}(y)\mathcal{F}'(\mathcal{F}^{-1}(|y|))\big) & y \neq 0.
\end{cases} \]
Then $b$ is a $C^1$ vector field on $X$, and for the semiflow $(f^t)_{t \geq 0}$ on $X$ generated by $b$, the measure $\mu$ is attracting but neither physical nor ergodic (and hence also not mixing).
\end{prop}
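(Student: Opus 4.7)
My plan is to verify the four claims separately, the key preliminary being an explicit solution of the flow. For $y_0 \neq 0$ the ansatz $y(t) = \mathrm{sgn}(y_0)\mathcal{F}(\mathcal{F}^{-1}(|y_0|)+t)$ satisfies the prescribed $y$-equation and has $y(t) \to 0$; the $x$-component then satisfies $\dot x = y(t)\tilde b(x)$, so $x(t) = \tilde f^{s(t,y_0)}(x_0)$ with time change $s(t,y_0) = \mathrm{sgn}(y_0)\big[\log\log(\mathcal{F}^{-1}(|y_0|)+t) - \log\log \mathcal{F}^{-1}(|y_0|)\big]$, a strictly monotone function of $t$ tending to $\pm\infty$ only as slowly as $\log\log t$. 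Because $b \equiv \mathbf{0}$ on $\tilde X \times \{0\}$, every point of $\mathrm{supp}\,\mu$ is a fixed point of $(f^t)$, which immediately gives invariance of $\mu$ and non-ergodicity: any Borel $A_0 \subset \tilde X$ with $\tilde\mu(A_0) \in (0,1)$ yields a non-trivial $f$-invariant set $A_0 \times \{0\}$ of intermediate $\mu$-measure, so $\mu$ is not ergodic and hence not mixing. For the $C^1$ regularity of $b$, only the behaviour at $y = 0$ needs checking; setting $t = \mathcal{F}^{-1}(|y|)$, the asymptotics $\mathcal{F}'(t)/\mathcal{F}(t) = -(1+\log t)/(t\log t) \to 0$ and $\mathcal{F}''(t)/\mathcal{F}'(t) \to 0$ as $t \to \infty$ show that the derivative of the second component of $b$ exists and is continuous at $0$.

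For the attracting-measure property I take the open neighbourhood $U = X$ of $\mathrm{supp}\,\mu$. For any $m$-a.c.\ probability measure $\nu = h \cdot (\tilde m \otimes \mathrm{Leb}_\mathbb{R})$ and any bounded continuous $g \colon X \to \mathbb{R}$, Fubini gives
\[ \int g \, d(f^t\nu) = \int_\mathbb{R} \int_{\tilde X} g\big(\tilde f^{s(t,y)}(x), y(t,y)\big) h(x,y) \, \tilde m(dx) \, dy. \]
For each fixed $y \neq 0$ the trajectory stays in the compact set $\tilde X \times [-|y|,|y|]$ and $y(t,y) \to 0$, so uniform continuity of $g$ there lets us replace $g(\tilde f^s(x),y(t,y))$ by $g_0(\tilde f^s(x)) := g(\tilde f^s(x), 0)$ with an error tending to $0$ uniformly in $x$. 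Then mixing of $\tilde\mu$ (in whichever time direction is consistent with $\mathrm{sgn}(y)$, using that mixing is time-reversal symmetric for invariant measures of an invertible flow) combined with the equivalence of $\tilde\mu$ and the Riemannian volume yields that the pushforward of $h(\cdot,y)\tilde m$ by $\tilde f^{s(t,y)}$ converges strongly to $\big(\int h(\cdot,y) \, d\tilde m\big)\tilde\mu$ for each fixed $y \neq 0$. Dominated convergence in $y$, with dominating function $\|g\|_\infty \int_{\tilde X} h(\cdot,y) \, d\tilde m$ (which has integral $\|g\|_\infty$), then gives $\int g \, d(f^t\nu) \to \int g_0 \, d\tilde\mu = \int g \, d\mu$.

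For non-physicality I take any $(x_0, y_0)$ with $y_0 \neq 0$ and $x_0 \in \mathrm{Basin}(\tilde\mu)$ (a Lebesgue-full subset of $\tilde X$, by Proposition~\ref{prop:ergequiv} and the equivalence $\tilde\mu \sim \tilde m$). The key calculation is that, under the time change $s = s(t, y_0)$, when $t$ is uniform on $[0,T]$ the distribution of $s$ on $[0, s(T)]$ has density $\frac{1}{T}\cdot 1/y(t(s))$, which grows doubly exponentially in $s$ and hence, as $T \to \infty$, concentrates all of its mass in $[s(T) - \delta, s(T)]$ for every $\delta > 0$. Combined with uniform continuity of the bounded map $s \mapsto g(\tilde f^s(x_0), y(t(s)))$ on $[0,\infty)$ (which follows from compactness of $\tilde X$, boundedness of $\tilde b$, and continuity of $g$ on the compact $\tilde X \times [-|y_0|, |y_0|]$), this forces
\[ \frac{1}{T}\int_0^T g(f^t(x_0, y_0)) \, dt = g_0(\tilde f^{s(T)}(x_0)) + o(1) \quad \textrm{as } T \to \infty. \]
Since $s(T) \to \infty$ continuously in $T$, and since $x_0 \in \mathrm{Basin}(\tilde\mu)$ with $\mathrm{supp}\,\tilde\mu = \tilde X$, the orbit $\{\tilde f^s(x_0)\}_{s \geq 0}$ visits every nonempty open subset of $\tilde X$ at arbitrarily large times; for any non-constant continuous $g_0$ this forces $g_0(\tilde f^{s(T)}(x_0))$ to oscillate and fail to converge. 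Hence $(x_0, y_0) \notin \mathrm{Basin}(\mu)$, and since this covers a Lebesgue-full subset of any neighbourhood of $\mathrm{supp}\,\mu$, $\mu$ is not physical. I expect the concentration-of-reparametrised-time calculation underlying the last displayed equation to be the main technical step; the attracting argument is then a routine combination of mixing, uniform continuity, and dominated convergence.
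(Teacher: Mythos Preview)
Your proposal is correct and follows essentially the same route as the paper: you derive the same explicit flow formula, use the identical disintegration-plus-dominated-convergence argument for the attracting property, and base non-physicality on the same concentration computation (that $\psi_y^{-1}(\psi_y(T)-\delta)/T \to 0$, i.e.\ the reparametrised time piles up in $[s(T)-\delta,s(T)]$). The only cosmetic differences are that the paper phrases the concentration in terms of shrinking balls $\tilde B_\varepsilon(\tilde f^{\psi_y(T)}x)$ rather than via a test function $g_0$, and it shows $(x,y)\notin\mathrm{Basin}(\mu)$ for \emph{all} $x\in\tilde X$ (using only that $\tilde\mu$ is not Dirac), whereas you restrict to $x_0\in\mathrm{Basin}(\tilde\mu)$ to force recurrent oscillation---either suffices since $\mathrm{Basin}(\tilde\mu)$ is $\tilde m$-full.
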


\begin{proof}
Note that $b$ is well-defined, since for all $y \in \mathbb{R} \setminus \{0\}$, we have that $|y|$ is in the domain $(0,\infty)$ of $\mathcal{F}^{-1}$ and $\mathcal{F}^{-1}(|y|)$ is in the domain $(1,\infty)$ of $\mathcal{F}'$. Now the $T\tilde{X}$-component of $b(x,y)$ is equal to $y\tilde{b}(x)$ for all $(x,y) \in X$, and this has $C^1$ dependence on $(x,y)$; so in order to show that $b$ is a $C^1$ vector field, it only remains to show that the function $b_2 \colon \mathbb{R} \to \mathbb{R}$ given by
\[ b_2(y) = \begin{cases}
        0 & y=0 \\
        \mathrm{sgn}(y)\mathcal{F}'(\mathcal{F}^{-1}(|y|)) & y\neq 0
    \end{cases} \]
is $C^1$. Now $\mathcal{F}$ is $C^\infty$ on $(1,\infty)$ and $\mathcal{F}'$ is strictly negative on $(1,\infty)$; hence, by the inverse function theorem, we have that $\mathcal{F}^{-1}$ is $C^\infty$ on $(0,\infty)$. So $b_2$ is $C^\infty$ on $\mathbb{R} \setminus \{0\}$. It remains to show that $b_2$ is continuously differentiable at $0$. As $y \to 0$, $\mathcal{F}^{-1}(|y|) \to \infty$ and so $\mathcal{F}'(\mathcal{F}^{-1}(|y|)) \to 0$; hence $b_2$ is continuous at $0$. Therefore, in order to show that $b_2$ is continuously differentiable at $0$, it is sufficient (by the Mean Value Theorem) to show that $b_2'(y)$ is convergent as $y \to 0$. For $y \in \mathbb{R} \setminus \{0\}$, we have
\[ b_2'(y) = \frac{\mathcal{F}''(\mathcal{F}^{-1}(|y|))}{\mathcal{F}'(\mathcal{F}^{-1}(|y|))}. \]
Now $\mathcal{F}''(t)=\frac{2(\log t)^2 + 3\log t + 2}{t^3(\log t)^3}$. Note that for all sufficiently large $t$, $\mathcal{F}''(t)<\frac{1}{t^3}$ and $|\mathcal{F}'(t)|>\frac{1}{t^{2+\alpha}}$ for $\alpha \in (0,1)$, and hence $\left|\frac{\mathcal{F}''(t)}{\mathcal{F}'(t)}\right|<t^{\alpha-1}$. So $\frac{\mathcal{F}''(t)}{\mathcal{F}'(t)} \to 0$ as $t \to \infty$. Hence, as $y \to 0$, $\mathcal{F}^{-1}(|y|) \to \infty$ and so $b_2'(y) \to 0$.

Thus we have shown that $b$ is a $C^1$ vector field. One can verify that $(f^t)_{t \geq 0}$ is explicitly given by
\[ f^t(x,y) = \begin{cases}
    (x,0) & y=0 \\
    \big( \tilde{f}^{\psi_y(t)}x , \varphi_y(t) \big) & y \neq 0
\end{cases} \]
where
\begin{align*}
    \varphi_y(t) &= \mathrm{sgn}(y)\mathcal{F}\big(t + \mathcal{F}^{-1}(|y|)\big) \\
    \psi_y(t) &= \mathrm{sgn}(y)\Big(\log\log\!\big(t + \mathcal{F}^{-1}(|y|)\big) - \log\log\!\big(\mathcal{F}^{-1}(|y|)\big)\Big).
\end{align*}

We first show that $\mu$ is attracting with respect to $(f^t)$. Let $\tilde{m}$ be the Riemannian measure on $\tilde{X}$, and let $m$ be the product of $\tilde{m}$ and the Lebesgue measure on $\mathbb{R}$. Let $U=\tilde{X} \times (-1,1) \subset X$. So $U$ is a neighbourhood of $\mathrm{supp}\,\mu$. Now fix any $m$-absolutely continuous probability measure $\nu$ on $X$ with $\nu(U)=1$, and let $h=\frac{d\nu}{dm}$. We have $\int_{-1}^1 \int_{\tilde{X}} h(x,y) \, \tilde{m}(dx) \, dy = 1$, and so in particular, for Lebesgue-almost all $y \in (-1,1)$, $\int_{\tilde{X}} h(x,y) \, \tilde{m}(dx)<\infty$. Hence, for Lebesgue-almost every $y \in (-1,1)$, we can define an $\tilde{m}$-absolutely continuous finite measure $\nu_y$ on $\tilde{X}$ by
\[ \nu_y(S) = \int_S h(x,y) \, \tilde{m}(dx) \quad \forall \, S \in \mathcal{B}(\tilde{X}). \]
Now $\nu_y$ is $\tilde{\mu}$-absolutely continuous, and so $\tilde{f}^t\nu_y$ converges weakly to $\nu_y(\tilde{X})\tilde{\mu}$ as $t \to \pm\infty$. For each $t$, we have
\[ f^t\nu = \int_{-1}^1 (\tilde{f}^{\psi_y(t)}\nu_y) \otimes \delta_{\varphi_y(t)} \, dy, \]
and considering the large-$t$ behaviour of the integrand: we have $\varphi_y(t) \to 0$ and $|\psi_y(t)| \to \infty$, and so for Lebesgue-almost all $y$,
\[ (\tilde{f}^{\psi_y(t)}\nu_y) \otimes \delta_{\varphi_y(t)} \, \overset{\textrm{weakly}}{\to} \, \big(\nu_y(\tilde{X})\tilde{\mu}\big) \otimes \delta_0 \, = \, \nu_y(\tilde{X})\mu \]
as $t \to \infty$. We can then apply the dominated convergence theorem, since $\int_{-1}^1 \nu_y(\tilde{X}) \, dy = 1 < \infty$, to obtain that $f^t\nu$ converges weakly to $\mu$ as $t \to \infty$.

It is clear that $\mu$ is not ergodic, since every point in $\mathrm{supp}\,\mu$ is a fixed point of $(f^t)$ but $\mu$ itself is not a Dirac mass.

Finally, we show that $\mu$ is not physical, by showing that every $(x,y) \in X$ with $y \neq 0$ is not in $\mathrm{Basin}(\mu)$. Fix $(x,y) \in X$ with $y > 0$; an identical argument holds for $y<0$. Let
\[ p_T = \frac{1}{T} \int_0^T \delta_{\tilde{f}^{\psi_y(t)}x} \, dt \]
for each $T>0$. Note that $p_T$ is the $\tilde{X}$-marginal of $\frac{1}{T} \int_0^T \delta_{f^t(x,y)} \, dt$. So, since $\tilde{\mu}$ is not a Dirac mass, it will be sufficient to show that ``$p_T$ tends towards being Dirac as $T \to \infty$''. Specifically, we will show that for all $\varepsilon>0$, $p_T(\tilde{B}_\varepsilon(\tilde{f}^{\psi_y(T)}x)) \to 1$ as $T \to \infty$, where $\tilde{B}_\varepsilon(\cdot)$ denotes the open $\varepsilon$-ball in the geodesic distance $\tilde{d}$ on $\tilde{X}$.

For convenience, let $c=\mathcal{F}^{-1}(y)$. Note that $\psi_y \colon (0,\infty) \to (0,\infty)$ is increasing and bijective. Let $M=\max_{z \in \tilde{X}} \|b(z)\|$. Now fix $\varepsilon>0$. For each $T>0$ with $\psi_y(T) \geq \frac{\varepsilon}{M}$, for each $\tau \in (\psi_y(T)-\frac{\varepsilon}{M},\psi_y(T)]$, we have $\tilde{d}(\tilde{f}^\tau x,\tilde{f}^{\psi_y(T)}x)<\varepsilon$; so
\begin{align*}
   1 - p_T(\tilde{B}_\varepsilon(\tilde{f}^{\psi_y(T)}x)) &= \frac{\mathrm{Lebesgue}\{0 \leq t \leq T : \tilde{d}(\tilde{f}^{\psi_y(t)} x,\tilde{f}^{\psi_y(T)}x) \geq \varepsilon \}}{T} \\
   &\leq \frac{\mathrm{Lebesgue}\{0 \leq t \leq T : \psi_y(t) \leq \psi_y(T)-\frac{\varepsilon}{M} \}}{T} \\
   &= \frac{\psi_y^{-1}(\psi_y(T)-\frac{\varepsilon}{M})}{T}.
\end{align*}
Now $\psi_y^{-1}(\psi_y(T)-\frac{\varepsilon}{M})$ is the unique value $t$ such that
\[ \log\log(t+c) = \log\log(T+c) - \tfrac{\varepsilon}{M}, \]
and solving this gives
\[ \psi_y^{-1}(\psi_y(T)-\tfrac{\varepsilon}{M}) = (T+c)^{e^{-\frac{\varepsilon}{M}}} - c = \Big(1+\frac{c}{T}\Big)^{e^{-\frac{\varepsilon}{M}}}T^{e^{-\frac{\varepsilon}{M}}} - c. \]
So
\[ 1 - p_T(\tilde{B}_\varepsilon(\tilde{f}^{\psi_y(T)}x)) \leq \frac{\psi_y^{-1}(\psi_y(T)-\frac{\varepsilon}{M})}{T} = \Big(1+\frac{c}{T}\Big)^{e^{-\frac{\varepsilon}{M}}}T^{-1+e^{-\frac{\varepsilon}{M}}} - \frac{c}{T} \ \to 0 \ \textrm{ as } T \to \infty. \qedhere \]
\end{proof}

\begin{rmk}
In the proof of Proposition~\ref{prop:counterex}, the only properties of $\varphi_y$ and $\psi_y$ that were needed in order to obtain that $\mu$ is attracting were that $\varphi_y(t) \to 0$ and $|\psi_y(t)| \to \infty$ as $t \to \infty$; but to obtain that $\mu$ is not physical, our strategy relied on a particularly slow growth rate of $\psi_y$. If, for example, we had chosen $\mathcal{F}(t)=\frac{1}{t}$ so that $b(x,y)=(y\tilde{b}(x),-|y|y)$ and $\psi_y$ has logarithmic growth $|\psi_y(t)|=\log(t+c)-\log(c)$, then we would still have that $\mu$ is attracting and non-ergodic, but the proof of non-physicality would not have gone through, since our bound on $1 - p_T(\tilde{B}_\varepsilon(\tilde{f}^{\psi_y(T)}x))$ would have been
\[ \frac{1}{T}\left[(T+c) \!\times\! (e^{-\frac{\varepsilon}{M}}) - c \right] \]
rather than
\[ \frac{1}{T}\left[(T+c)^{\,\wedge\,} (e^{-\frac{\varepsilon}{M}}) - c \right] \]
and the former tends to $e^{-\frac{\varepsilon}{M}} \approx 1$ rather than to $0$ as $T \to \infty$.
\end{rmk}

\section{From mixing to attracting} \label{sec:Result}

In \cite{Bowen:1975}, the SRB measure on an Axiom~A attractor is constructed in Theorem~3.3; then, \cite[Theorem~5.1]{Bowen:1975} establishes that the SRB measure is physical, while \cite[Theorem~5.3]{Bowen:1975} establishes that the SRB measure is attracting under a mild extra assumption. The proofs of these two facts are separately built upon the material developed prior to Sec.~5 of \cite{Bowen:1975}. More precisely, the mild extra condition is that the unstable manifold at each point in the attractor is dense in the attractor; but this only enters the proof via the fact that it implies that the SRB measure is mixing (\cite[Remark~3.5]{Bowen:1975}). So what the proof of \cite[Theorem~5.3]{Bowen:1975} really says is that if the SRB measure on an Axiom~A attractor is mixing then it is attracting; in this section, we provide a topological abstraction of the overarching strategy adopted in \cite{Bowen:1975} to show this.

We first need to introduce a particular type of attractor.

\subsection{Attractors via orbit-tracking}

\begin{defi}
Given a set $U \subset X$, a non-empty compact invariant set $A \subset X$, and a value $\varepsilon>0$, we say that \emph{$U$ $\varepsilon$-orbit-tracks $A$} if there exists $T \geq 0$ such that for every $x \in U$ there exists $y \in A$ such that for all $t \geq T$, $d(f^tx,f^ty)<\varepsilon$.
\end{defi}

Note that in the above definition, given $\varepsilon$, the value of $T$ is not free to be chosen in terms of $x$ as an unbounded function of $x$, but rather must be uniform across $x \in U$.

\begin{defi}
We will say that a non-empty compact invariant set $A \subset X$ is \emph{stable via orbit-tracking} if for every $\varepsilon>0$, there is a neighbourhood $U_\varepsilon$ of $A$ that $\varepsilon$-orbit-tracks $A$.
\end{defi}

\begin{defi} \label{def:orbtr}
We will say that a non-empty compact invariant set $A \subset X$ is an \emph{attractor via orbit-tracking} if there is a neighbourhood $U$ of $A$ with the property that for every $\varepsilon>0$, $U$ $\varepsilon$-orbit-tracks $A$.
\end{defi}

We now show how being an attractor via orbit-tracking can be characterised in terms of being an attractor that is stable via orbit-tracking.

\begin{prop}
Let $A \subset X$ be a non-empty compact invariant set.
\begin{enumerate}[\indent (A)]
    \item If $A$ is an attractor via orbit-tracking, then $A$ is stable via orbit-tracking and is an attractor (in the sense of Definition~\ref{def:attractor}): any set $U$ fulfilling the description in Definition~\ref{def:orbtr} is contained in $\mathrm{Basin}(A)$.
    \item Suppose $X$ is locally compact. If $A$ is stable via orbit-tracking and is an attractor (in the sense of Definition~\ref{def:attractor}), then $A$ is an attractor via orbit-tracking: any compact neighbourhood of $A$ contained in $\mathrm{Basin}(A)$ fulfils the description of $U$ in Definition~\ref{def:orbtr}.
\end{enumerate}
\end{prop}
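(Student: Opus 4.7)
For part (A), I would argue directly from the definitions. Given any neighbourhood $U$ of $A$ fulfilling Definition~\ref{def:orbtr}, the set $U_\varepsilon := U$ witnesses stability for every $\varepsilon > 0$. To see $U \subset \mathrm{Basin}(A)$, I would fix $x \in U$ and an arbitrary $\varepsilon > 0$: the $\varepsilon$-orbit-tracking property produces $y \in A$ and a threshold $T_\varepsilon$ with $d(f^t x, f^t y) < \varepsilon$ for $t \geq T_\varepsilon$, and invariance of $A$ gives $f^t y \in A$, so $d(f^t x, A) < \varepsilon$ for $t \geq T_\varepsilon$. As $\varepsilon$ was arbitrary, $d(f^t x, A) \to 0$, giving $x \in \mathrm{Basin}(A)$.

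For part (B), I would first note that local compactness of $X$, together with compactness of $A$ and openness of $\mathrm{Basin}(A)$, guarantees the existence of a compact neighbourhood $U$ of $A$ lying inside $\mathrm{Basin}(A)$ (cover $A$ by finitely many compact neighbourhoods of its points, each chosen inside $\mathrm{Basin}(A)$). Fix any such $U$ and any $\varepsilon > 0$. From stability via orbit-tracking I would extract an open neighbourhood $U_\varepsilon$ of $A$ and a uniform time $S_\varepsilon$ such that every $z \in U_\varepsilon$ has an associated $w \in A$ with $d(f^s z, f^s w) < \varepsilon$ for $s \geq S_\varepsilon$, and pick $\delta > 0$ with $B_\delta(A) \subset U_\varepsilon$.

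The heart of the argument is to combine a compactness reduction with this stability to produce a \emph{single} time $T^*$ such that $f^{T^*} U \subset U_\varepsilon$. For each $x \in U \subset \mathrm{Basin}(A)$, the convergence $d(f^t x, A) \to 0$ supplies a $T(x)$ with $f^{T(x)}x \in B_\delta(A) \subset U_\varepsilon$; continuity of $f^{T(x)}$ then yields an open neighbourhood $N(x) \ni x$ with $f^{T(x)} N(x) \subset U_\varepsilon$. Compactness of $U$ lets me pass to a finite subcover $\{N(x_i)\}_{i=1}^k$, and I set $T^* := \max_i T(x_i)$. For each $y \in U$, picking $i$ with $y \in N(x_i)$ gives $f^{T(x_i)}y \in U_\varepsilon$, and the stability property produces $w \in A$ with $d(f^{T(x_i)+s}y, f^s w) < \varepsilon$ for all $s \geq S_\varepsilon$.

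The step I expect to be the main obstacle is the reversal that converts this ``tracking after $f^{T(x_i)}$'' into a tracking witness from time zero. Since the semiflow is not invertible on $X$, there is no a priori way to push $w$ backward by $T(x_i)$; what saves the argument is the surjectivity $f^{T(x_i)} A = A$ built into the invariance of $A$, which lets me write $w = f^{T(x_i)} w'$ for some $w' \in A$, whence $d(f^t y, f^t w') < \varepsilon$ for all $t \geq T(x_i) + S_\varepsilon$ and in particular for all $t \geq T^* + S_\varepsilon$. This exhibits $U$ as $\varepsilon$-orbit-tracking $A$ uniformly in $y$, completing the proof.
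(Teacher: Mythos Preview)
Your proof is correct and matches the paper's approach essentially line for line: compactness plus continuity to obtain a finite cover with entry times $T(x_i)$, then surjectivity of $f^{T(x_i)}|_A$ from invariance to pull the tracking witness back to time zero, with uniform threshold $T^* + S_\varepsilon$. One minor expository slip: your stated goal of producing ``a single time $T^*$ such that $f^{T^*} U \subset U_\varepsilon$'' is not what you actually establish (nor what is needed, and it need not hold), but your execution correctly uses the individual $T(x_i)$ instead, so the argument goes through.
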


\begin{proof}
(A) is immediate. For (B), fix a compact neighbourhood $U$ of $A$ contained in $\mathrm{Basin}(A)$, and fix $\varepsilon>0$. Take a neighbourhood $\tilde{U}$ of $A$ and a value $\tilde{T} \geq 0$ such that for every $x \in \tilde{U}$ there exists $y \in A$ such that for all $t \geq \tilde{T}$, $d(f^tx,f^ty)<\varepsilon$. For every $x \in U$, since $x$ is in $\mathrm{Basin}(A)$, we can find $t$ such that $f^tx \in \tilde{U}^\circ$, which immediately implies that $f^tV \subset \tilde{U}$ for some neighbourhood $V$ of $x$. Consequently, by the compactness of $U$, there exist sets $V_1,\ldots,V_n \subset X$ and $t_1,\ldots,t_n \geq 0$ (for some $n \in \mathbb{N}$) such that $U \subset \bigcup_{i=1}^n V_i$ and $f^{t_i}V_i \subset \tilde{U}$ for each $i$. Now let $T=\tilde{T}+\max(t_1,\ldots,t_n)$. For each $x \in U$, taking $i$ such that $x \in V_i$, we have $f^{t_i}x \in \tilde{U}$ and so we can find $\tilde{y} \in A$ such that for all $t \geq \tilde{T}$, $d(f^{t+t_i}x,f^t\tilde{y})<\varepsilon$. Since $A$ is invariant, we can then find $y \in A$ such that $\tilde{y}=f^{t_i}y$. So for all $t \geq T \geq \tilde{T}+t_i$, we have $d(f^tx,f^ty)<\varepsilon$.
\end{proof}

Note that for a general invariant set, being an attractor via orbit-tracking is strictly stronger than being stable via orbit-tracking. For example, for the flow on $\mathbb{R}^2$ generated by $(\dot{x},\dot{y})=(-y,x)$, every circle centred on $(0,0)$ is stable via orbit-tracking but is not an attractor via orbit-tracking.

As implied by the last sentence of the proof of \cite[Proposition~4.4]{Bowen:1975} (which is itself based on \cite[Theorem~7.4]{Hirsch:1970}), Axiom~A attractors are attractors via orbit-tracking.

\subsection{Abstraction of Bowen and Ruelle's result}

The strategy of \cite{Bowen:1975} for proving that mixing SRB measures on Axiom~A attractors are attracting is to evolve the initial absolutely continuous distribution $\nu$ sufficiently far forward in time under $(f^t)$ that $f^t\nu$ can be approximated by $\mu$-absolutely continuous probability measures, and then to choose this approximation so as to remain a good approximation forever afterwards under the subsequent time-evolution by $(f^t)$; so since $\mu$-absolutely continuous measures evolve under $(f^t)$ towards $\mu$ itself (by the definition of mixing), $\nu$ also evolves under $(f^t)$ towards $\mu$. The ability to choose the approximation so as to remain good under indefinite subsequent time-evolution by $(f^t)$ is based on the facts that Axiom~A attractors are attractors via orbit-tracking, and SRB measures on Axiom~A attractors have a kind of ``controlled chaoticness relative to the Riemannian measure'' obtained in \cite[Corollary~4.6]{Bowen:1975}.

The topological essence of the above strategy is communicated more precisely in our following result, which applies to the general setup of Sec.~\ref{sec:general_setup}.

\begin{thm} \label{thm:mixSRB}
Let $\mu$ be a mixing measure whose support $A$ is an attractor via orbit-tracking. Suppose furthermore that there exist arbitrarily small values $\delta>0$ for which one can find an unbounded set $\mathcal{T} \subset [0,\infty)$ with
\begin{equation} \label{eq:CEhyp} \inf_{\tau \in \mathcal{T}\!, \, x \in A} \, \frac{\mu \!\left( y \in A \, : \ d(f^tx,f^ty) < \delta \ \ \forall \, t \in [0,\tau] \right)}{m \!\left( y \in X \, : \ d(f^tx,f^ty) < 3\delta \ \ \forall \, t \in [0,\tau] \right)} \ > \ 0. \end{equation}
Then $\mu$ is attracting.
\end{thm}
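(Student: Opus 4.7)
The plan is to mirror the Bowen--Ruelle strategy recalled in the introduction: advance $\nu$ by an orbit-tracking time $T$; replace the image $f^{T}\nu$ by a proxy probability measure $\rho$ on $A$ that orbit-tracks $f^{T}\nu$ against any bounded Lipschitz observable under every subsequent pushforward; use \eqref{eq:CEhyp} to show $\rho\ll\mu$ with bounded density; and then let mixing of $\mu$ finish the job.

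For the setup, I would fix a bounded Lipschitz test function $g$ of Lipschitz constant $L$, an $m$-absolutely continuous probability measure $\nu$, and $\varepsilon>0$. Using Proposition~\ref{prop:basin}(B) together with Lemma~\ref{lemma:cone_conv} applied to the subcone of nonnegative bounded densities supported in sets of the form $U_{i}\cap U$ (where $U$ is the orbit-tracking neighborhood from Definition~\ref{def:orbtr} and $U_{i}$ fulfills~(H)), I may assume $\nu$ has bounded $m$-density and is supported in a single such $U_{0}\subset U$. Choose $\delta>0$ with $3L\delta<\varepsilon$ for which \eqref{eq:CEhyp} holds with infimum $K>0$ over an unbounded set $\mathcal{T}\subset[0,\infty)$. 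Orbit-tracking supplies a time $T$ and (via Lindel{\"o}f plus a standard Borel partition of $U$ subordinate to the tracking sets indexed by points of $A$) a measurable $\sigma\colon U\to A$ with $d(f^{T+s}x,f^{T+s}\sigma(x))<\delta$ for every $x\in U$ and $s\ge 0$. Let $\rho$ be the pushforward of $\nu$ under $x\mapsto f^{T}\sigma(x)$; moving the pointwise tracking bound inside the integral immediately yields
\[\Bigl|\int h\,df^{T+s}\nu-\int h\,df^{s}\rho\Bigr|\le L_{h}\delta\qquad(s\ge 0)\]
for every bounded Lipschitz $h$ of Lipschitz constant $L_{h}$, which is the ``good approximation forever'' that the strategy demands.

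For the absolute-continuity step, Lemma~\ref{lemma:bd} furnishes $M_{T}>0$ with $df^{T}\nu/dm\le M_{T}$. Whenever $x_{0}\in U$ has $f^{T}\sigma(x_{0})\in V_{\tau}(x):=\{y\in A:d(f^{s}x,f^{s}y)<\delta\ \forall s\in[0,\tau]\}$, combining with the tracking bound via the triangle inequality shows $f^{T}x_{0}\in W_{\tau}(x):=\{z\in X:d(f^{s}z,f^{s}x)<3\delta\ \forall s\in[0,\tau]\}$, and so
\[\rho(V_{\tau}(x))\le (f^{T}\nu)(W_{\tau}(x))\le M_{T}\,m(W_{\tau}(x))\le \tfrac{M_{T}}{K}\,\mu(V_{\tau}(x))\]
for every $\tau\in\mathcal{T}$ and $x\in A$. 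Setting $C:=M_{T}/K$, I would upgrade this Bowen-ball domination to $\rho\le C\mu$ as measures on $A$, using the unboundedness of $\mathcal{T}$ (giving arbitrarily large $\tau$) and the freedom to make $\delta$ (hence the Bowen ball diameters) arbitrarily small, combined with mixing of $\mu$ (to prevent $\rho$-mass from concentrating on positive-diameter ``stable sets'' of $\mu$-null outer measure). Once $\rho\ll\mu$ with density $\le C$ is in hand, the mixing definition gives $f^{s}\rho\to\mu$ weakly, and combining with the tracking display above gives $|\!\int g\,df^{T+s}\nu-\int g\,d\mu|\le L\delta+|\!\int g\,df^{s}\rho-\int g\,d\mu|<\varepsilon$ for all sufficiently large $s$, proving that $\mu$ is attracting.

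The main obstacle I expect is precisely the upgrade from the Bowen-ball inequality $\rho(V_{\tau}(x))\le C\mu(V_{\tau}(x))$ to genuine $\mu$-absolute continuity of $\rho$. In Bowen's Axiom~A setting this is immediate from expansivity, as the $V_{\tau}(x)$ shrink to $\{x\}$ as $\tau\to\infty$ and so form a Vitali basis; in the abstract setup here the Bowen balls need not shrink below diameter $2\delta$ for any fixed $\delta$. The substitute I would attempt is to vary $\delta$ along a sequence $\delta_{n}\to 0$ and pass to a limit of the associated pushforwards $\rho_{\delta_{n}}$, using \eqref{eq:CEhyp} at each $\delta_{n}$ to control densities of the limit and using mixing of $\mu$ to exclude $\delta_{n}$-stable sets of positive $\mu$-measure that could otherwise trap the limit mass; this is where the genuine measure-theoretic work of the proof lies, rather than merely translating definitions.
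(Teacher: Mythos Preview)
Your overall strategy matches the paper's, and you have correctly identified the crux: upgrading the Bowen-ball domination to genuine $\mu$-absolute continuity of the proxy measure. However, your proposed route to that upgrade is not the one the paper takes, and as you yourself anticipate, your route does not obviously close.

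Your construction fixes a \emph{single} proxy $\rho$ (the pushforward of $\nu$ under $x\mapsto f^{T}\sigma(x)$) and then tries to prove $\rho\le C\mu$ from the inequality $\rho(V_{\tau}(x))\le C\mu(V_{\tau}(x))$. For a fixed $\delta$ the Bowen balls $V_{\tau}(x)$ need not shrink, so they do not form a differentiation basis; you therefore propose to let $\delta\to 0$ along a sequence and take a limit of the corresponding $\rho_{\delta_{n}}$. But each $\rho_{\delta_{n}}$ comes with its own tracking time $T_{\delta_{n}}$ and its own constant $K_{\delta_{n}}$ from \eqref{eq:CEhyp}, with no uniformity guaranteed across $n$; and the appeal to ``mixing of $\mu$ to exclude $\delta_{n}$-stable sets'' is not a concrete argument. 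This step, as written, is a genuine gap.

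The paper avoids the problem entirely by working at a \emph{fixed} $\delta$ and never trying to prove that a pre-given measure is $\mu$-absolutely continuous. Instead, for each $\tau_{n}\in\mathcal{T}$ it chooses a finite $(d_{\tau_{n}},\delta)$-bi-separated set $\mathcal{S}_{n}\subset A$ with $B_{\delta}^{\tau_{n}}(A)\subset B_{3\delta}^{\tau_{n}}(\mathcal{S}_{n})$, and builds an approximation $\mu_{n}=\sum_{x\in\mathcal{S}_{n}}(f^{T}\nu)(N_{x,n})\,\mu(\,\cdot\,|B_{\delta}^{\tau_{n}}(x))$ that is $\mu$-absolutely continuous \emph{by construction}. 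Bi-separatedness makes the indicators $\mathbbm{1}_{B_{\delta}^{\tau_{n}}(x)}$ disjoint, so $d\mu_{n}/d\mu\le\max_{x}(f^{T}\nu)(B_{3\delta}^{\tau_{n}}(x))/\mu(B_{\delta}^{\tau_{n}}(x))$, and \eqref{eq:CEhyp} bounds this uniformly in $n$. Each $\mu_{n}$ tracks $f^{T}\nu$ only on $[0,\tau_{n}]$, not forever; the ``forever'' is recovered by passing to a weak-limit point $\mu_{\infty}$ and using Lemma~\ref{lemma:abscont} (dominated weak convergence) to conclude $\mu_{\infty}\ll\mu$. Thus the key idea you are missing is to trade a single all-time proxy for a sequence of finite-horizon proxies that are explicitly $\mu$-absolutely continuous with uniformly bounded density, and to obtain the all-time proxy only at the end as a weak limit.
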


\begin{rmk}
In \cite{Bowen:1975}, $2\delta$ is used where we have $3\delta$ above. More specifically, \cite[Corollary~4.6]{Bowen:1975} says that the SRB measure on an Axiom~A attractor fulfils \eqref{eq:CEhyp} with $2\delta$ in place of $3\delta$ for all sufficiently small $\delta>0$, with $\mathcal{T}$ being the whole of $[0,\infty)$; and then \cite{Bowen:1975} uses this fact to prove \cite[Theorem~5.3]{Bowen:1975}. In this proof of \cite[Theorem~5.3]{Bowen:1975}, the use of the $2\delta$ is connected with the use of a maximal $(d_\tau,\delta)$-separated subset of $A$ under the metric $d_\tau(x,y)=\max_{0 \leq t \leq \tau} d(f^tx,f^ty)$. (A set is called $(d_\tau,\delta)$-separated if no two distinct points in the set are within a $d_\tau$-distance less than $\delta$ of each other.) However, the proof does not then seem to go through correctly, unless one can establish a finite $\tau$-independent upper bound on the number of mutually intersecting $(d_\tau,\delta)$-balls around the points in this maximal $(d_\tau,\delta)$-separated subset of $A$. It is not clear that one can do this. Therefore, in our proof of Theorem~\ref{thm:mixSRB}, we consider a maximal disjoint union of $(d_\tau,\delta)$-balls around points in $A$ rather than a maximal $(d_\tau,\delta)$-separated subset of $A$; in order to follow the logic of the proof of \cite[Theorem~5.3]{Bowen:1975}, this then requires the $2\delta$ to be strengthened to $3\delta$.
\end{rmk}

\subsection{Some general preliminaries for the proof of Theorem~\ref{thm:mixSRB}}

\subsubsection{Bi-separated sets}

We state the definition and result below with reference to the metric $d$, but in the proof of Theorem~\ref{thm:mixSRB} this will be applied with reference to a different, topologically equivalent metric.

\begin{defi}
Given $\delta>0$, we say that a set $\mathcal{S} \subset X$ is \emph{$(d,\delta)$-bi-separated} if for any two distinct $x,y \in \mathcal{S}$, $B_\delta(x) \cap B_\delta(y)=\emptyset$.
\end{defi}

Note that if a $(d,\delta)$-bi-separated set $\mathcal{S}$ is contained in a compact subset of $X$, then $\mathcal{S}$ must be finite.

\begin{lemma} \label{lemma:bisep}
Given a compact set $A \subset X$ and a value $\delta>0$, there exists a subset $\mathcal{S}$ of $A$ such that $\mathcal{S}$ is $(d,\delta)$-bi-separated and $B_\delta(A) \subset B_{3\delta}(\mathcal{S})$.
\end{lemma}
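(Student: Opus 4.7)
The plan is to take $\mathcal{S}$ to be a maximal $(d,\delta)$-bi-separated subset of $A$ (with respect to inclusion) and check that maximality forces the covering property $B_\delta(A) \subset B_{3\delta}(\mathcal{S})$.

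First I would observe that the family $\mathfrak{F}$ of $(d,\delta)$-bi-separated subsets of $A$ is nonempty (it contains $\emptyset$ and every singleton in $A$) and is closed under unions of chains: if $(\mathcal{S}_i)_{i \in I}$ is a chain in $\mathfrak{F}$ and $x,y$ are two distinct points of $\bigcup_i \mathcal{S}_i$, then both lie in a common $\mathcal{S}_i$ by the chain property, and so $B_\delta(x) \cap B_\delta(y) = \emptyset$. Zorn's lemma then furnishes a maximal element $\mathcal{S} \in \mathfrak{F}$. (Alternatively, since $X$ is separable so is $A$, and one can build $\mathcal{S}$ by a countable greedy procedure on a dense sequence in $A$; I would use Zorn's lemma for brevity.)

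Next I would extract the key consequence of maximality: for every $y \in A$ there exists $x \in \mathcal{S}$ with $d(x,y) < 2\delta$. Indeed, if $y \in \mathcal{S}$ one takes $x = y$; otherwise $\mathcal{S} \cup \{y\}$ is a strict superset of $\mathcal{S}$ in $A$, so by maximality it fails to be $(d,\delta)$-bi-separated, and since $\mathcal{S}$ itself is bi-separated the failure must involve $y$, giving some $x \in \mathcal{S}$ with $B_\delta(x) \cap B_\delta(y) \neq \emptyset$, i.e.\ $d(x,y) < 2\delta$. Thus $A \subset B_{2\delta}(\mathcal{S})$.

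Finally, I would combine this with the triangle inequality: given any $z \in B_\delta(A)$, pick $y \in A$ with $d(z,y) < \delta$ and then $x \in \mathcal{S}$ with $d(x,y) < 2\delta$, so that $d(z,x) < 3\delta$ and hence $z \in B_{3\delta}(\mathcal{S})$. There is no real obstacle; the only point demanding a moment's care is the correct exponent in the inclusion, which is exactly why maximality delivers $2\delta$ (covering of $A$) and the extra $\delta$ (covering of $B_\delta(A)$) accumulates to the stated $3\delta$.
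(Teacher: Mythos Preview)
Your proof is correct and essentially identical to the paper's: take a maximal $(d,\delta)$-bi-separated subset $\mathcal{S}$ of $A$, use maximality to get $A \subset B_{2\delta}(\mathcal{S})$, then the triangle inequality for $B_\delta(A) \subset B_{3\delta}(\mathcal{S})$. The only cosmetic difference is that you invoke Zorn's lemma for the existence of a maximal set, whereas the paper uses the observation (stated just before the lemma) that any $(d,\delta)$-bi-separated subset of a compact set is finite, so one of maximal cardinality---hence maximal under inclusion---exists directly.
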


\begin{proof}
Since $A$ is compact, there must exist a $(d,\delta)$-bi-separated set $\mathcal{S} \subset A$ such that for every $x \in A \setminus \mathcal{S}$, the set $\mathcal{S} \cup \{x\}$ is not $(d,\delta)$-bi-separated. So for any $x \in A$, $B_\delta(x)$ has non-empty intersection with $B_\delta(\mathcal{S})$, and so by the triangle inequality $x \in B_{2\delta}(\mathcal{S})$. So we have shown that $A \subset B_{2\delta}(\mathcal{S})$, and so (again by the triangle inequality) $B_\delta(A) \subset B_{3\delta}(\mathcal{S})$.
\end{proof}

\subsubsection{Approximation of non-absolutely continuous measures by absolutely continuous measures}

We describe a general procedure that can enable an arbitrary probability measure $\nu$ supported near $\mathrm{supp}\,\mu$ to be approximated by a $\mu$-absolutely continuous probability measure $P$.

\begin{defi} \label{def:singular_to_ac}
Let $\mu$ be a measure on $X$, and let $\nu$ be a probability measure on $X$. Suppose that for some finite set $\mathcal{S}$ we have a family $\mathcal{M}=(M_i)_{i \in \mathcal{S}}$ of sets $M_i \in \mathcal{B}(X)$ with $0<\mu(M_i)<\infty$ and a pairwise-disjoint family $\mathcal{N}=(N_i)_{i \in \mathcal{S}}$ of mutually disjoint sets $N_i \in \mathcal{B}(X)$ with $\sum_{i \in \mathcal{S}} \nu(N_i) = 1$. Then we define the measure $P[\nu;\mu,\mathcal{M},\mathcal{N}]$ on $X$ by
\[ P[\nu;\mu,\mathcal{M},\mathcal{N}] \ = \ \sum_{i \in \mathcal{S}} \nu(N_i)\mu(\,\cdot\,|M_i), \]
where $\mu(B|M_i):=\frac{\mu(B \cap M_i)}{\mu(M_i)}$ for all $B \in \mathcal{B}(X)$.
\end{defi}

Note that since $\sum_{i \in \mathcal{S}} \nu(N_i) = 1$, the measure $P[\nu;\mu,\mathcal{M},\mathcal{N}]$ is a convex combination of probability measures $\mu(\,\cdot\,|M_i)$ and thus is itself a probability measure. Furthermore, it is clear that $P[\nu;\mu,\mathcal{M},\mathcal{N}]$ is $\mu$-absolutely continuous. The following basic fact can be understood intuitively as saying that ``if $\max_{i \in \mathcal{S}} \mathrm{diam}(M_i \cup N_i)$ is small then $P[\nu;\mu,\mathcal{M},\mathcal{N}]$ is close to $\nu$''.

\begin{lemma} \label{lemma:ac_approximation}
Fix $\varepsilon>0$ and a bounded measurable function $g \colon X \to \mathbb{R}$. In the setting of Definition~\ref{def:singular_to_ac}, suppose that for each $i \in \mathcal{S}$ there exists $x_i \in M_i$ such that for all $y \in M_i \cup N_i$, $|g(y)-g(x_i)|<\frac{\varepsilon}{2}$. Then
\[ \left| \int_X g \, d\nu - \int_X g \, dP[\nu;\mu,\mathcal{M},\mathcal{N}] \right| \ < \ \varepsilon. \]
\end{lemma}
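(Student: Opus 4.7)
The plan is to introduce the intermediate ``Riemann sum'' $R := \sum_{i \in \mathcal{S}} \nu(N_i) g(x_i)$ and to bound $|\int g \, d\nu - R|$ and $|\int g \, dP[\nu;\mu,\mathcal{M},\mathcal{N}] - R|$ separately, each by $\varepsilon/2$; the triangle inequality then delivers the lemma. The point of $R$ is that both integrals should look like $R$ up to an error controlled by the oscillation of $g$ on each $M_i$ and each $N_i$.

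For the $\nu$-side, the $N_i$ are pairwise disjoint and $\sum_{i} \nu(N_i) = 1$, so $\nu$ is concentrated on their disjoint union; hence $\int g \, d\nu = \sum_i \int_{N_i} g \, d\nu$, and the hypothesis $|g - g(x_i)| < \varepsilon/2$ on $N_i$ gives
\[ \left| \int_{N_i} g \, d\nu - \nu(N_i) g(x_i) \right| \ \leq \ \int_{N_i} |g - g(x_i)| \, d\nu \ \leq \ \tfrac{\varepsilon}{2} \nu(N_i), \]
which sums to $\varepsilon/2$. For the $P$-side, the definition of $P$ combined with $\sum_i \nu(N_i) = 1$ gives
\[ \int_X g \, dP[\nu;\mu,\mathcal{M},\mathcal{N}] - R \ = \ \sum_{i \in \mathcal{S}} \nu(N_i) \left( \frac{1}{\mu(M_i)} \int_{M_i} g \, d\mu - g(x_i) \right), \]
and the pointwise bound $|g - g(x_i)| < \varepsilon/2$ on $M_i$ (with $\mu(M_i)>0$) shows each bracket is at most $\varepsilon/2$ in absolute value, so again the total is at most $\varepsilon/2$.

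Combining by the triangle inequality gives $|\int g \, d\nu - \int g \, dP| \leq \varepsilon$. The only subtlety to attend to is that the lemma asserts strict inequality $<\varepsilon$; for this I will invoke the elementary fact that if a bounded measurable $f$ satisfies $f < c$ pointwise on a measurable set $A$ with $\rho(A)>0$, then $\int_A f \, d\rho < c\rho(A)$ strictly (exhaust $A$ by $\{f \leq c-1/n\}$ and take $n$ large enough that this subset has positive measure). Since $\sum_i \nu(N_i)=1$ forces some $\nu(N_i)>0$, applying this to that $N_i$ turns the corresponding $\nu$-estimate into a strict one, and hence the final bound into $<\varepsilon$. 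There is no real obstacle here; the proof is just bookkeeping with the triangle inequality, and the strictness is a minor measure-theoretic observation.
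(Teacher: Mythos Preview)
Your proof is correct and follows essentially the same approach as the paper: both insert the constants $g(x_i)$ to compare each piece of $\int g\,d\nu$ and $\int g\,dP$ against $\nu(N_i)g(x_i)$, bounding the discrepancies via the oscillation hypothesis on $M_i \cup N_i$. The paper carries this out in a single chain of inequalities without explicitly naming your intermediate sum $R$, and simply writes a strict $<$ at the penultimate step (justified exactly as you spell out), so the content is identical.
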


\begin{proof}
We have
\begin{align*}
    &\phantom{{}={}} \ \Bigg| \int_X g \, d\nu - \int_X g \, dP[\nu;\mu,\mathcal{M},\mathcal{N}] \Bigg| \\
    &= \ \left| \sum_{i \in \mathcal{S}} \left( \int_{N_i} g \, d\nu \ - \ \nu(N_i)\int_X g(y) \, \mu(dy|M_i) \right) \right| \\
    &= \ \left| \sum_{i \in \mathcal{S}} \left( \int_{N_i} g(y) - g(x_i) \, \nu(dy) \ + \ \nu(N_i)\int_X g(x_i) - g(y) \, \mu(dy|M_i) \right) \right| \\
    &\leq \ \sum_{i \in \mathcal{S}} \left( \int_{N_i} |g(y) - g(x_i)| \, \nu(dy) + \nu(N_i)\int_X |g(x_i) - g(y)| \, \mu(dy|M_i) \right) \\
    & < \ \tfrac{\varepsilon}{2}\sum_{i \in \mathcal{S}} (\nu(N_i) + \nu(N_i)) \\
    & = \ \varepsilon. \qedhere
\end{align*}
\end{proof}

Let us point out that Lemma~\ref{lemma:ac_approximation} does not require the collection of sets $\mathcal{M}$ to be mutually disjoint, although for our proof of Theorem~\ref{thm:mixSRB} it will be.

\subsubsection{Dominated weak convergence of absolutely continuous measures}

\begin{lemma} \label{lemma:abscont}
Let $\mu$ be a Borel measure on $X$ and let $(h_n)$ be a sequence of measurable functions $h_n \colon X \to [0,\infty]$ that is dominated by an $n$-independent $\mu$-integrable function $H \colon X \to [0,\infty]$. Suppose the sequence of measures $\mu_n \colon S \mapsto \int_S h_n \, d\mu$ converges weakly to a measure $\mu_\infty$. Then $\mu_\infty$ is $\mu$-absolutely continuous.
\end{lemma}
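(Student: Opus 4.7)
The plan is to combine the absolute continuity of the Lebesgue integral with the portmanteau characterisation of weak convergence via open sets (one of the equivalent formulations listed in Sec.~\ref{sec:Intro}). The key observation is that the uniform dominating function $H$ forces the family $(\mu_n)$ to share a single ``$\mu$-absolute-continuity modulus'', and any such modulus is preserved by weak limits on open sets.

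Concretely, I would first fix $\varepsilon > 0$ and use the absolute continuity of the Lebesgue integral for the $\mu$-integrable function $H$ to obtain $\delta > 0$ such that $\int_S H \, d\mu < \varepsilon$ whenever $\mu(S) < \delta$. Since $h_n \leq H$, this gives $\mu_n(S) < \varepsilon$ uniformly in $n$ whenever $\mu(S) < \delta$. Given an arbitrary $\mu$-null Borel set $N$, I would next invoke outer regularity of $\mu$ to find an open set $U \supseteq N$ with $\mu(U) < \delta$, so that $\mu_n(U) < \varepsilon$ for all $n$, and then apply the portmanteau inequality
\[ \mu_\infty(N) \ \leq \ \mu_\infty(U) \ \leq \ \liminf_{n \to \infty} \mu_n(U) \ \leq \ \varepsilon. \]
Since $\varepsilon > 0$ was arbitrary, $\mu_\infty(N) = 0$, as required.

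The main obstacle is merely ensuring that outer regularity of $\mu$ applies, i.e.\ that every $\mu$-null Borel set can be covered by open sets of arbitrarily small $\mu$-measure. This is standard for finite Borel measures on a metric space (and extends to $\sigma$-finite ones by exhaustion); in the paper's setting, where $X$ is separable metric and the lemma will be invoked with either a finite invariant measure or a Riemannian-type $\sigma$-finite measure, this is not a genuine difficulty. Everything else is essentially bookkeeping. One could alternatively extract an $L^1$-weak limit $h_\infty$ of a subsequence of $(h_n)$ via the Dunford--Pettis theorem and identify $d\mu_\infty = h_\infty \, d\mu$ using the fact (noted elsewhere in the paper) that bounded continuous functions form a determining class on separable metric spaces, but the open-set route above is shorter and more in keeping with the style of the rest of the paper.
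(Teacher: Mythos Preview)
Your proof is correct and follows essentially the same route as the paper's: outer regularity to produce a small open neighbourhood of the null set, followed by the portmanteau open-set inequality $\mu_\infty(U) \leq \liminf_n \mu_n(U)$. The only difference is that the paper applies outer regularity directly to the finite measure $S \mapsto \int_S H \, d\mu$ (which is automatically regular on a metric space and assigns $N$ measure zero) rather than to $\mu$ itself, thereby sidestepping the regularity issue for $\mu$ that you flag and making the absolute-continuity-of-the-integral step unnecessary.
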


\begin{proof}
Let $N$ be any $\mu$-null set. We will show that $\mu_\infty(N)<\varepsilon$ for all $\varepsilon>0$. Fix $\varepsilon$. Since finite measures on metric spaces are regular~\cite[Theorem~1.2]{Parthasarathy:2005} and the measure $S \mapsto \int_S H \, d\mu$ is a finite measure assigning zero measure to $N$, we can find an open neighbourhood $U$ of $N$ such that $\int_U H \, d\mu < \varepsilon$, and hence $\mu_\infty(N) \leq \mu_\infty(U) \leq \liminf_{n \to \infty} \mu_n(U) < \varepsilon$.
\end{proof}

\subsection{Proof of Theorem~\ref{thm:mixSRB}}

For each $\tau>0$, define the metric $d_\tau(x,y)=\max_{0 \leq t \leq \tau} d(f^tx,f^ty)$. Note that $d_\tau$ is topologically equivalent to $d$. For each $\delta>0$ and $x \in X$, let $B_\delta^\tau(x)$ be the open $d_\tau$-ball of radius $r$ about $x$. So if, for some $\varepsilon>0$, a set $U \subset X$ $\varepsilon$-orbit-tracks $A$, then there exists $T \geq 0$ such that for all $\tau>0$, $f^TU \subset B_\varepsilon^\tau(A)$.

Now since $A$ is compact and a finite union of sets satisfying (H) satisfies (H), there exists a neighbourhood of $A$ that satisfies (H). So let $U$ be an open neighbourhood of $A$ satisfying (H) such that $U$ $\varepsilon$-orbit-tracks $A$ for every $\varepsilon>0$. We will show that for every $m$-absolutely continuous probability measure $\nu$ with $\nu(U)=1$, we have $f^t\nu \to \mu$ weakly as $t \to \infty$. Since $L^\infty\big(U,m|_{\mathcal{B}(U)}\big) \cap L^1\big(U,m|_{\mathcal{B}(U)}\big)$ is dense in $L^1\big(U,m|_{\mathcal{B}(U)}\big)$, it is sufficient by Lemma~\ref{lemma:cone_conv} just to consider $\nu$ with bounded density.

So fix an $m$-absolutely continuous probability measure $\nu$ of bounded density such that $\nu(U)=1$, and fix a bounded $1$-Lipschitz function $g \colon X \to \mathbb{R}$. Fix any $\varepsilon>0$; we will show that $\limsup_{t \to \infty} \left| \int_X g \, d(f^t\nu) - \int_X g \, d\mu \right| \ \leq \ \varepsilon.$

Take $\delta \in (0,\frac{\varepsilon}{6})$ such that \eqref{eq:CEhyp} holds for some unbounded set $\mathcal{T}$, which we can take to be a countable set $\mathcal{T}=\{\tau_n : n \geq 1\}$ without loss of generality. Let $T \geq 0$ be such that for all $\tau>0$, $f^TU \subset B_\delta^\tau(A)$. For each $n \geq 1$, on the basis of Lemma~\ref{lemma:bisep} let $\mathcal{S}_n$ be a $(d_{\tau_n},\delta)$-bi-separated subset of $A$ such that $B_\delta^{\tau_n}(A) \subset B_{3\delta}^{\tau_n}(\mathcal{S}_n)$. So in particular, $f^TU \subset B_{3\delta}^{\tau_n}(\mathcal{S}_n)$. Let $\mathcal{M}_n:=(B_\delta^{\tau_n}(x))_{x \in \mathcal{S}_n}$. Let $\mathcal{N}_n\!=\!(N_{x,n})_{x \in \mathcal{S}_n}$ be a pairwise-disjoint cover of $f^TU$ by Borel sets $N_{x,n} \subset B_{3\delta}^{\tau_n}(x)$. Since $\nu(U)=1$ and $f^TU \subset \bigcup_{x \in \mathcal{S}_n} N_{x,n}$, we have that $\bigcup_{x \in \mathcal{S}_n} N_{x,n}$ is a $f^T\nu$-full measure set. Let $\mu_n:=P[f^T\nu;\mu,\mathcal{M}_n,\mathcal{N}_n]$. Since $g$ is $1$-Lipschitz and $3\delta<\frac{\varepsilon}{2}$, for each $t \in [0,\tau_n]$, we have $|g(f^ty) - g(f^tx)|<\frac{\varepsilon}{2}$ for all $x \in \mathcal{S}_n$ and $y \in B_\delta^{\tau_n}(x) \cup N_{x,n}$, and hence Lemma~\ref{lemma:ac_approximation} applied to $g \circ f^t$ gives
\begin{equation} \label{eq:mu_n} \left| \int_X g \, d(f^{T+t}\nu) - \int_X g \, d(f^t\mu_n) \right| < \varepsilon. \end{equation}
Now since the measures $\mu_n$ are $\mu$-absolutely continuous and hence supported on the compact set $A$, we can find a weak limit $\mu_\infty$ of a subsequence $(\mu_{m_n})$ of $(\mu_n)$. If there existed $t \geq 0$ such that
\[ \left| \int_X g \, d(f^{T+t}\nu) - \int_X g \, d(f^t\mu_\infty) \right| \ > \ \varepsilon \]
then all sufficiently large $n$ would have that
\[ \left| \int_X g \, d(f^{T+t}\nu) - \int_X g \, d(f^t\mu_{m_n}) \right| \ > \ \varepsilon, \]
contradicting \eqref{eq:mu_n} for the infinitely many values of $n$ with $\tau_n \geq t$. Hence we have that for all $t \geq 0$,
\begin{equation} \label{eq:muinfty} \left| \int_X g \, d(f^{T+t}\nu) - \int_X g \, d(f^t\mu_\infty) \right| \ \leq \ \varepsilon. \end{equation}
Now for all $n$,
\begin{align*}
\frac{d\mu_n}{d\mu} \ &= \ \sum_{x \in \mathcal{S}_n} \frac{f^T\nu(N_{x,n})}{\mu(B_\delta^{\tau_n}(x))}\mathbbm{1}_{B_\delta^{\tau_n}(x)} \\
&\leq \ \max_{x \in \mathcal{S}_n} \frac{f^T\nu(N_{x,n})}{\mu(B_\delta^{\tau_n}(x))} \qquad \textrm{since $\mathcal{S}_n$ is $(d_{\tau_n},\delta)$-bi-separated} \\
&\leq \ \max_{x \in \mathcal{S}_n} \frac{f^T\nu(B_{3\delta}^{\tau_n}(x))}{\mu(B_\delta^{\tau_n}(x))} \\
&\leq \ \max_{x \in \mathcal{S}_n} \frac{m(B_{3\delta}^{\tau_n}(x))}{\mu(B_\delta^{\tau_n}(x))} \left\|\frac{d(f^T\nu)}{dm}\right\|_\infty \\
&\leq \ C \left\|\frac{d(f^T\nu)}{dm}\right\|_\infty
\end{align*}
where $C$ can be taken independent of $n$ (since $\delta$ and $\mathcal{T}$ fulfil \eqref{eq:CEhyp}), and $\left\|\frac{d(f^T\nu)}{dm}\right\|_\infty$ is finite by Lemma~\ref{lemma:bd}. Hence by Lemma~\ref{lemma:abscont}, we have that $\mu_\infty$ is $\mu$-absolutely continuous, and so since $\mu$ is mixing,
\[ \left| \int_X g \, d(f^t\mu_\infty) - \int_X g \, d\mu \right| \,\to\, 0 \ \textrm{ as } t \to \infty \, ; \]
and combining this with \eqref{eq:muinfty} gives
\[ \limsup_{t \to \infty} \left| \int_X g \, d(f^{T+t}\nu) - \int_X g \, d\mu \right| \ \leq \ \varepsilon. \]

\section{Outlook} \label{sec:outlook}

A natural future direction is the development of methods for establishing the existence of attracting measures on attractors outside the Axiom~A setting of \cite{Bowen:1975,Ruelle:1976}, such as non-uniformly hyperbolic attractors. One basic question in this direction is whether there are cases beyond the Axiom~A setting where Theorem~\ref{thm:mixSRB} can be applied.

This direction has implications not only for autonomous dynamical systems: The physical-measure property and the attracting-measure property are relevant for nonautonomous and random dynamical systems~\cite{Baladi:2002,ashwin2021physical,newman23}; and in the setting of nonautonomous dynamical systems with an autonomous past limit, sufficient conditions have been provided in \cite{newman23} to be able to construct an attracting measure of the nonautonomous system given the knowledge of an attracting measure of the past-limiting autonomous system. So the natural step then needed to ``complete the picture'' is a way to establish the existence of an attractor measure for the past-limiting autonomous system.

Another natural question is whether, in analogy to the result of \cite{Pugh:1989} mentioned in Remark~\ref{rmk:weaker_physical}, one can find conditions under which mixing measures that are SRB in the sense of \cite{Young:2002} are guaranteed to be attracting at least in the weak sense of Remark~\ref{rmk:weak_attracting}.

\subsection*{Acknowledgments}

We would like to thank Ian Melbourne for useful discussions. For the purpose of open access, the authors have applied a Creative Commons Attribution (CC BY) licence to any Author Accepted Manuscript version arising. Funded by the European Union's Horizon 2020 research and innovation programme under grant agreement No 820970 (TiPES).

\bibliographystyle{plain}

\bibliography{attrrefs}

\end{document}